\documentclass[11pt]{amsart}
 \usepackage{amsmath,amssymb,enumerate}
 \usepackage[all]{xy}
 \usepackage{xcolor} 
 
 \newtheorem{pro}{Proposition}[section]
 \newtheorem{cor}[pro]{Corollary}
 \newtheorem{lem}[pro]{Lemma}
 \newtheorem{rem}[pro]{Remark}
 \newtheorem{exa}[pro]{Example}
  \newtheorem{defi}[pro]{Definition}
 \newtheorem{prop}[pro]{Proposition}
 \newtheorem{thm}[pro]{Theorem}

 \newcommand{\h}{\mathcal H}

 \numberwithin{equation}{section}

\usepackage{hyperref}

 \begin{document}

  \title[  Controlled $K$-Fusion Frame for Hilbert Spaces]{Controlled $K$-Fusion Frame for Hilbert Spaces}

\setcounter{tocdepth}{1}

  \author{N. ASSILA, S. KABBAJ and B. Moalige} 

 \address{Faculty of Sciences  \\  Ibn Tofail University \\
Kenitra \\
Marocco}

\email{nadia.assila@uit.ac.ma}

\address{Faculty of Sciences  \\  Ibn Tofail University \\
Kenitra \\
Marocco}

\email{samkabbaj@yahoo.fr}
\address{Faculty of Sciences  \\  Ibn Tofail University \\
Kenitra \\
Marocco}

\email{brahim.moalige@uit.ac.ma}

 \date{\today \\2010 Mathematics Subject Classification: 42C15, 46A35,  26A18.\\
Key words and phrases. Fusion Frame, $K$-Fusion Frame, Controlled Fusion Frame, Stabiliy. }

 \begin{abstract}
$K$-fusion frames are a generalization of fusion frames in frame theory. In this paper, we extend the concept of controlled fusion frames to controlled $K$-fusion frames, and we develop some results on the controlled $K$-fusion frames for Hilbert spaces, which generalized some well known  of controlled fusion frames case. also we discuss some characterizations of controlled Bessel $K$-fusion sequences and of controlled Bessel $K$-fusion. Further, we analyse stability conditions of controlled $K$-fusion frames under perturbation.

 \end{abstract}
 
 \maketitle

\section{Introduction}
Fames are more flexible than bases to solve some problems in Hilbert spaces. They were firstly introduced by Duffin and Schaeffer \cite{DS52} to study nonharmonic Fourier series in 1952, and widely studied by Daubechies, Grossman and Meyer \cite{DGM86} in 1986. More results of frames are in \cite{Ch16}.\\
Fusion frames as a genaralisation of frames were introduced by Casazza and Kutyniok in \cite{CK04} and further there were developed in their joint paper \cite{CKL08} with Li. The theory for fusion  frames is available in arbitrary separable Hilbert spaces (finite-dimensional or not). The motivation behind fusion frames comes from signal processing, more precisely, the desire to process and analyze large data sets efficiently. A natural idea is to split such data sets into suitable smaller "blocks" which can be treated independently. From a pure mathematical point of view, fusion frames are special cases of the $g$-frames \cite{SW06}. However, the connection to concrete applications is less apparent from the more abstract definition of $g$-frames. In 2012, L. Gavruta \cite{Ga12}  introduced the notions of $K$-frames in Hilbert space to study the atomic systems with respect to a bounded linear operator $K$. Controlled frames in Hilbert spaces have been introduced by P. Balaz \cite{ABG10} to improve the numerical efficiency of iterative algorithms for inventing  the frame operator. Further A. Khosravi \cite{KM12} generalized this concept to the case of fusion frames. He has showed that controlled fusion framee as a generalization of fusion frames give a generalized way to obtain numerical advantage in the sense of preconditioning to check the fusion frame condition. In  2015 Rahimi \cite{NNR15} defined the concept of controlled $K$-frames in Hilbert spaces and showed that controlled $K$- frames are equivalent to $K$-frames. \\  
Motivated by the above literature, we introduce and investigate some properties of controlled $K$-fusion frames, we also generalize some known results for controlled fusion frames to controlled $K$-fusion frames. Finaly, we  present  perturbation result for controlled $K$-fusion frames.  \\
This paper is organized as follows. In Section 2, we recall several definitions about fusion frames, $K$-fusion frames and controlled fusion frames. Then, we give a basic properties about a bounded linear operator. In Section 3, we introduce the concept of controlled $K$-fusion frames and discuss their properties. In section 4, we analyze stability conditions of controlled $K$-fusion frames under perturbation.
\section{Preliminaries and Notations}
 Throughout this paper, we will adopt the following notations. $\mathcal{H}$ is separable Hilbert space, $\{W_i\}_{i\in I}$ is sequence of closed subspaces of $\mathcal{H}$, where $I$ is a countable index set. the family of all bounded linear operators for $\mathcal{H}$ into $\mathcal{H}$ is denoted $B(\mathcal{H})$. We
denote $\mathcal{R}_T$, $\mathcal{N}_T$, range and null space  of a bounded linear operator $T$, respectively. $GL(\h)$ is the set of all bounded invertible operators on $\h$ with bounded inverse, and $GL(\h)^+$ denotes the set of all positive operators in $GL(\h)$. $\pi_{W_i}$ is the orthogonal projection from $\mathcal{H}$ into $W_i$, and $\{w_i\}_{i\in I}$ is a family of weights, i.e. $w_i>0$, for any $i\in I$.\\
The space $(\oplus_{i\in I}\mathcal{H})_{l^2}$ is defined by \begin{align*}
(\oplus_{i\in I}\mathcal{H})_{l^2}=\{\{f_i\}_{i\in I}: f_i\in\mathcal{H}, i\in I, \sum_{i\in I}\Vert f_i\Vert^2<\infty  \},
\end{align*}
with the inner product  is defined by
$$\langle \{f_i\}_{i\in I},\{g_i\}_{i\in I}\rangle= \sum_{i\in I} \langle f_i, g_i\rangle_{\mathcal{H}}.$$
 $(\oplus_{i\in I}\mathcal{H})_{l^2}$ is a separable Hilbert space \cite{KM12}.
 \subsection{Preliminaries}
 \subsection{Fusion frames}
  \begin{defi}\cite{CKL08}
let $\{W_i\}_{i\in I}$ be a family of closed subspaces of a Hilbert space $\mathcal{H}$. let $\{w_i\}_{i\in I}$ be a family of weights, the family $\mathcal{W}=\{W_i,w\}_{i\in I}$ is called a $K-$fusion frame for $\mathcal{H}$, if there exist positive constants $A\leq B<\infty$ such that \begin{align}\label{def: fusionframe}
 A\Vert f\Vert^2\leq \sum_{i\in I} w_i^2\Vert \pi_{W_i}f\Vert^2\leq B\Vert f\Vert^2,  f \in\mathcal{H}.
 \end{align}
 \end{defi}
 $A$ and $B$ are called lower and upper bounds of fusion frame, respectively.\\  
 If only the right inequality of \ref{def: fusionframe} holds, we call the family $\{W_i,w\}_{i\in I}$ is  fusion bessel sequence. 
 \subsection{$K-$fusion frames}

\begin{defi}\cite{AN18}
 Let $K\in B(\mathcal{H})$, let $\{W_i\}_{i\in I}$ be a family of closed subspaces of a Hilbert space $\mathcal{H}$, and let $\{w_i\}_{i\in I}$ be a family of weights. Then the family $\mathcal{W}=\{W_i,w\}_{i\in I}$ is called a $K-$fusion frame for $\mathcal{H}$, if there exist positive constants $A\leq B<\infty$ such that \begin{align}
 A\Vert K^*f\Vert^2\leq \sum_{i\in I} w_i^2\Vert \pi_{W_i}f\Vert^2\leq B\Vert f\Vert^2,  f \in\mathcal{H}.
 \end{align}
 Where $K^*$ is the adjoint operator of $K$.
 \end{defi}
 $A$ and $B$ are called lower and upper bounds of $K$-fusion frame, respectively.  
 
 suppose that $\{W_i,w_i\}_{i\in I}$ is a fusion Bessel sequence for $\h$, then the synthesis operator of $\{W_i,w_i\}_{i\in I}$ is defined by $T_W:\quad (\sum_{i\in I}\oplus W_i)_{l^2}\longrightarrow \h$,
 \begin{align*}
 T_{W}(\{f_i\}_{i\in I})=\sum_{i\in I} w_i f_i,\quad \{f_i\}_{i\in I}\in(\sum_{i\in I}\oplus W_i)_{l^2}. 
 \end{align*}
 Where  \begin{align*}
((\sum_{i\in I}\oplus W_i)_{l^2}=\{\{f_i\}_{i\in I}: f_i\in W_i, i\in I, \sum_{i\in I}\Vert f_i\Vert^2<\infty  \}.
\end{align*}
  Its adjoint operator, whitch is called the analysis operator  $T^*_W:\quad \h \longrightarrow(\sum_{i\in I}\oplus W_i)_{l^2} $, is defined by 
\begin{align*}
T^*_W(f)=\{ w_i \pi_{W_i} f\}_{i\in I},\quad f\in \h.
\end{align*}
And the $K$-fusion frame operator associated is $S_W:\quad\h \longrightarrow \h$.\begin{align}\label{sfusionframeoper}
S_W(f)=\sum_{i\in I} w_i^2 \pi_{W_i}f, \quad f\in \h.
\end{align}
\subsection{controlled fusion frame}
\begin{defi}\cite{KM12}
Let $\{W_i\}_{i\in I}$ be a family of closed subspaces of a Hilbert space $\mathcal{H}$, let $\{w_i\}_{i\in I}$ be a family of weights, and let $T,U \in GL(\h)$ . Then the family $\mathcal{W}=\{W_i,w\}_{i\in I}$ is called a $(T,U)$-controlled fusion frame for $\mathcal{H}$, if there exist positive constants $A\leq B<\infty$ such that \begin{align}
 A\Vert K^*f\Vert^2\leq \sum_{i\in I} w_i^2 \langle \pi_{W_i}Tf,\pi_{W_i}Uf\rangle\leq B\Vert f\Vert^2,  f \in\mathcal{H}.
 \end{align}
 \end{defi}
 $A$ and $B$ are called lower and upper bounds of $(T,U)$-controlled fusion frame, respectively. 
For further information in $K$-fusion frame and controlled fusion frame theory we refer the reader to \cite{AN18} and  \cite{KM12}.\\
  In theory of frames, often use the following theorem, which describes some properties of the adjoint operator.
 \begin{thm}\cite{Ch16}\label{lem: adjoint}
 Let $\mathcal{H}_1,\mathcal{H}_2$ be Hilbert spaces, and suppose that  $U \in B(\mathcal{H}_1,\mathcal{H}_2)$. Then, 
 \begin{itemize}
 \item [i)] $U^* \in B(\mathcal{H}_2,\mathcal{H}_1)$ and $\Vert U^*\Vert=\Vert U\Vert$.
 \item [ii)] $U$ is surjective if and only if $\exists A >0$ such that $\Vert U^*h \Vert_{\h_2}\geq A\Vert h\Vert_{\h_1}$.
 \end{itemize}
 \end{thm}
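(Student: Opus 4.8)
The plan is to handle the two items in turn, since i) supplies the adjoint and its norm, which ii) then exploits. (Note that in ii) the vector $U^*h$ lies in $\mathcal{H}_1$ for $h\in\mathcal{H}_2$, so the inequality should be read as $\Vert U^*h\Vert_{\mathcal{H}_1}\geq A\Vert h\Vert_{\mathcal{H}_2}$.)

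For i), I would first produce $U^*$ from the Riesz representation theorem: fixing $h\in\mathcal{H}_2$, the map $x\mapsto\langle Ux,h\rangle$ is a bounded linear functional on $\mathcal{H}_1$ (its norm is at most $\Vert U\Vert\,\Vert h\Vert$ by Cauchy--Schwarz), so there is a unique vector $U^*h\in\mathcal{H}_1$ with $\langle Ux,h\rangle=\langle x,U^*h\rangle$ for all $x\in\mathcal{H}_1$; uniqueness makes $h\mapsto U^*h$ linear. For the norm identity I would use the two-sided estimate
$$\Vert U^*h\Vert^2=\langle U^*h,U^*h\rangle=\langle UU^*h,h\rangle\leq \Vert U\Vert\,\Vert U^*h\Vert\,\Vert h\Vert,$$
which gives $\Vert U^*h\Vert\leq\Vert U\Vert\,\Vert h\Vert$, hence $\Vert U^*\Vert\leq\Vert U\Vert$; applying the same reasoning to $U^*$ and using $U^{**}=U$ yields the reverse inequality, so $\Vert U^*\Vert=\Vert U\Vert$.

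For ii) I would prove the two implications using the defining relation to pass between $U$ and $U^*$. For the forward direction, if $U$ is surjective the open mapping theorem gives $c>0$ with $U(B_{\mathcal{H}_1}(0,1))\supseteq B_{\mathcal{H}_2}(0,c)$, and then for every $h\in\mathcal{H}_2$,
$$\Vert U^*h\Vert=\sup_{\Vert x\Vert\leq 1}|\langle x,U^*h\rangle|=\sup_{\Vert x\Vert\leq 1}|\langle Ux,h\rangle|\geq\sup_{\Vert y\Vert<c}|\langle y,h\rangle|=c\Vert h\Vert,$$
so the bound holds with $A=c$. For the converse, if $\Vert U^*h\Vert\geq A\Vert h\Vert$ for all $h$, then the self-adjoint operator $UU^*$ on $\mathcal{H}_2$ satisfies $\langle UU^*h,h\rangle=\Vert U^*h\Vert^2\geq A^2\Vert h\Vert^2$, so it is positive and bounded below, hence invertible in $GL(\mathcal{H}_2)$; then $U^*(UU^*)^{-1}$ is a right inverse of $U$, since $U\big(U^*(UU^*)^{-1}\big)=(UU^*)(UU^*)^{-1}=I$, which shows $U$ is surjective.

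The routine content is the invocation of the Riesz representation and open mapping theorems; the one point deserving care is the converse in ii), where the cleanest route is to recognize the hypothesis as positive-definiteness of $UU^*$ and to cite that a positive, bounded-below, self-adjoint operator is boundedly invertible (equivalently, one could argue that $U^*$ bounded below forces $\mathcal{R}_U$ to be closed and then use $\overline{\mathcal{R}_U}=(\mathcal{N}_{U^*})^\perp=\mathcal{H}_2$). This is the main obstacle, and it is the step that actually uses completeness of $\mathcal{H}_2$.
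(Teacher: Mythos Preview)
Your argument is correct and follows the standard textbook route (Riesz representation for the existence of $U^*$, the $C^*$-identity $\Vert U\Vert^2=\Vert U^*U\Vert$ for the norm equality, open mapping for the forward direction of ii), and invertibility of $UU^*$ for the converse). There is nothing to compare against in the paper itself: Theorem~\ref{lem: adjoint} is quoted from \cite{Ch16} as a preliminary fact and is not proved in the paper, so your write-up simply supplies a proof the authors chose to omit. One small presentational remark: you rightly flag the swapped subscripts on the norms in the statement of ii); it would be worth stating the corrected inequality explicitly at the outset rather than only in parentheses.
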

 It is well-Known that not all bounded operator  $U$ on a Hilbert space $\h$ are invertible: an operator $U$ needs to be injective and  surjective in order to be invertible. For doing this, one can use right-inverse operator. The following lemma shows that if an operator $U$ has closed range, there exists a "right-inverse operator" $U^\dagger$ in the following sense:
 \begin{lem}\cite{Ch16}\label{lem: Pseudo-inverse}
 Let $\mathcal{H}_1,\mathcal{H}_2$ be Hilbert spaces, and suppose that  $U \in B(\mathcal{H}_1,\mathcal{H}_2)$ with closed range $\mathcal{R}_U$. then there exists a bounded operator $U^\dagger : \quad \mathcal{H}_2\longrightarrow\mathcal{H}_1 $ for which \begin{align}
 U U^\dagger x=x, \quad  x \in \mathcal{R}_U, 
\end{align}
 and \begin{align*}
 (U^*)^\dagger=(U^\dagger)^*.
 \end{align*}
 \end{lem}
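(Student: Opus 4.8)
The plan is to construct $U^\dagger$ explicitly from an orthogonal decomposition and then to establish the two asserted identities separately, treating the adjoint relation as the delicate point.

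First I would exploit that $\mathcal{R}_U$ is closed to split both spaces orthogonally, $\mathcal{H}_1 = \mathcal{N}_U \oplus \mathcal{N}_U^\perp$ and $\mathcal{H}_2 = \mathcal{R}_U \oplus \mathcal{R}_U^\perp$. Restricting $U$ to $\mathcal{N}_U^\perp$ yields a map $\tilde U := U|_{\mathcal{N}_U^\perp} : \mathcal{N}_U^\perp \to \mathcal{R}_U$ which is injective, since its kernel is $\mathcal{N}_U \cap \mathcal{N}_U^\perp = \{0\}$, and surjective onto $\mathcal{R}_U$ by definition of the range. Because $\mathcal{N}_U^\perp$ and $\mathcal{R}_U$ are closed subspaces of Hilbert spaces, they are themselves Hilbert spaces, so by the bounded inverse theorem the algebraic inverse $\tilde U^{-1} : \mathcal{R}_U \to \mathcal{N}_U^\perp$ is bounded.

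Next I would define
$$U^\dagger := \tilde U^{-1}\, \pi_{\mathcal{R}_U} : \mathcal{H}_2 \longrightarrow \mathcal{H}_1,$$
where $\pi_{\mathcal{R}_U}$ is the orthogonal projection onto $\mathcal{R}_U$; this is bounded as a composition of bounded maps. The identity $U U^\dagger x = x$ for $x \in \mathcal{R}_U$ is then immediate: for such $x$ one has $\pi_{\mathcal{R}_U} x = x$, hence $U^\dagger x = \tilde U^{-1} x \in \mathcal{N}_U^\perp$ and $U U^\dagger x = \tilde U (\tilde U^{-1} x) = x$. Along the way I would record the two structural facts $U U^\dagger = \pi_{\mathcal{R}_U}$ and $U^\dagger U = \pi_{\mathcal{N}_U^\perp}$, both self-adjoint idempotents.

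The main obstacle is the relation $(U^*)^\dagger = (U^\dagger)^*$. First one must note that $(U^*)^\dagger$ even makes sense: by the closed range theorem $\mathcal{R}_{U^*}$ is closed precisely because $\mathcal{R}_U$ is, so the construction above applies to $U^*$ as well. The cleanest route I see is to verify that $U^\dagger$ satisfies the four Moore–Penrose relations $U U^\dagger U = U$, $U^\dagger U U^\dagger = U^\dagger$, $(U U^\dagger)^* = U U^\dagger$, $(U^\dagger U)^* = U^\dagger U$, which follow at once from the two self-adjoint projection identities above, and which characterize $U^\dagger$ uniquely. Taking adjoints of these four equations and using $(U^*)^* = U$ shows that $(U^\dagger)^*$ satisfies the corresponding four relations with $U$ replaced by $U^*$; by uniqueness of the Moore–Penrose inverse of $U^*$ this forces $(U^\dagger)^* = (U^*)^\dagger$. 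The one point to be careful about is that the bare existence statement of the lemma does not by itself pin down $U^\dagger$, so the uniqueness argument must be anchored to the explicit construction, i.e. one checks that the constructed operator is \emph{the} Moore–Penrose inverse before transporting uniqueness across the adjoint.
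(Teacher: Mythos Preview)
The paper does not give its own proof of this lemma; it is quoted from \cite{Ch16} and stated without argument as a standard background fact on pseudo-inverses. There is therefore nothing in the paper to compare your proposal against.

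That said, your argument is the standard and correct construction of the Moore--Penrose inverse: the closed-range hypothesis gives the orthogonal splittings, the bounded inverse theorem yields boundedness of $\tilde U^{-1}$, the identity $UU^\dagger = \pi_{\mathcal{R}_U}$ immediately delivers $UU^\dagger x = x$ on $\mathcal{R}_U$, and the four Moore--Penrose relations together with their uniqueness transport cleanly under the adjoint to give $(U^*)^\dagger = (U^\dagger)^*$. Your caution about anchoring the uniqueness argument to the explicit construction (rather than to the bare existence statement) is well placed, since the lemma as phrased only asserts existence of \emph{some} right inverse on $\mathcal{R}_U$, which is not unique without the additional normalization $\mathcal{N}_{U^\dagger} = \mathcal{R}_U^\perp$ that the paper mentions immediately after the lemma.
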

 The operator $U^\dagger$ is called the Pseudo-inverse of $U$.\\
 In the literature, one will ofen see the pseudo-inverse of an operator $U$ with closed range defined as the unique operator $U^\dagger$ satisfying that \begin{align*}
 \mathcal{N}_{U^\dagger}=\mathcal{R}_U^\perp,\quad U U^\dagger x=x, \quad  x \in \mathcal{R}_U.
\end{align*}  
The following lemma is necessary for our results.
 \begin{lem}\cite{Ga07}\label{lem: Projection-operator}
Let $V \subseteq \mathcal{H}$ be a closed subspace, and $T$ be a linear bounded operator on
$\mathcal{H}$. Then
\begin{align}
\label{lem: pi-T}
\pi_{V} T^*=\pi_ V T^*\pi_{\overline{{T V}}}.
\end{align}
If $T$ is a unitary (i.e. $T^*T = T T^*=Id_{\mathcal{H}}$, then
\begin{align}
\label{lem: piTunitaire}
\pi_{\overline{{TV}}} T=T\pi_V .
\end{align} \end{lem}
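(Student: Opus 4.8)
The plan is to prove each identity separately by reducing it to a statement about orthogonal complements, using the defining property of orthogonal projections together with the adjoint relation $\langle T^*g, v\rangle = \langle g, Tv\rangle$.

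For the first identity, I would fix $f \in \mathcal{H}$ and decompose it along $\mathcal{H} = \overline{TV} \oplus (\overline{TV})^\perp$, writing $f = \pi_{\overline{TV}} f + g$ with $g \in (\overline{TV})^\perp$. Applying $\pi_V T^*$ to both sides, the claimed equality $\pi_V T^* = \pi_V T^* \pi_{\overline{TV}}$ becomes equivalent to showing $\pi_V T^* g = 0$ for every $g \perp \overline{TV}$. The key step is the observation that $T^*$ maps $(\overline{TV})^\perp$ into $V^\perp$: indeed, for such a $g$ and any $v \in V$ we have $\langle T^* g, v \rangle = \langle g, Tv \rangle = 0$, since $Tv \in TV \subseteq \overline{TV}$; hence $T^*g \in V^\perp$ and so $\pi_V T^* g = 0$. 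This settles the first part.

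For the second identity I would first note that, $T$ being unitary, it is in particular invertible, so $TV$ is already closed and $\overline{TV} = TV$. The main auxiliary fact is the conjugation formula $\pi_{TV} = T \pi_V T^*$. To establish it I would check that $T\pi_V T^*$ is self-adjoint (as $(T\pi_V T^*)^* = T\pi_V^* T^* = T\pi_V T^*$), that it is idempotent using $T^* T = \mathrm{Id}_{\mathcal{H}}$ (so $(T\pi_V T^*)^2 = T\pi_V (T^*T)\pi_V T^* = T\pi_V^2 T^* = T\pi_V T^*$), and that its range is exactly $T(V) = TV$; by uniqueness of the orthogonal projection onto a closed subspace it must coincide with $\pi_{TV}$. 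Granting this, I compute $\pi_{\overline{TV}} T = \pi_{TV} T = T\pi_V T^* T = T\pi_V$, again invoking $T^* T = \mathrm{Id}_{\mathcal{H}}$, which is precisely the asserted identity.

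The only genuinely delicate point is the second part: one must justify both that $TV$ is closed (so that $\pi_{\overline{TV}} = \pi_{TV}$ and the formula lands in the right space) and the conjugation formula for the projection. Everything else is a direct unwinding of the definition of an orthogonal projection through the adjoint identity, so beyond this bookkeeping I do not anticipate any essential obstacle.
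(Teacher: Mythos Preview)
Your argument is correct. For the first identity, the reduction to showing $T^*((\overline{TV})^\perp)\subseteq V^\perp$ via the adjoint relation is exactly the standard proof and there is nothing to add. For the second identity, the closedness of $TV$ (since $T$ is a bounded bijection with bounded inverse and $V$ is closed) and the verification that $T\pi_V T^*$ is a self-adjoint idempotent with range $TV$ are all fine; the only small point worth stating explicitly is that the range computation uses surjectivity of $T^*$, so that $T\pi_V T^*(\mathcal{H})=T\pi_V(\mathcal{H})=TV$, which you implicitly assume.

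As for comparison: the paper does not supply its own proof of this lemma --- it is quoted from \cite{Ga07} and stated without argument --- so there is nothing to compare your approach against. Your proof is the standard one and would be acceptable as a self-contained justification.
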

\begin{pro}\cite{Pa73} \label{prop: mmMM}
 Let $T: \quad \mathcal{H} \longrightarrow \mathcal{H}$ be a linear operator. Then the following condition are equivalent:
 \begin{enumerate}
 \item There exist $m > 0$ and $M <\infty $, such that $m I \leq T \leq M I$;
\item $T$ is positive and there exist $m > 0$ and $M < \infty$, such that $m\Vert f \Vert^2\leq \Vert T^{\frac{1}{2}}f\Vert^2\leq M\Vert f \Vert^2$ for all $f\in \mathcal{H}$;
\item $T$ is positive and $T^{\frac{1}{2}}\in GL(\mathcal{H})$;
\item There exists a self-adjoint operator $A \in GL(\mathcal{H})$ , such that
$A^2=T$;
\item $T \in GL^+(\mathcal{H})$.
 \end{enumerate}
 \end{pro}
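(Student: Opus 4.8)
The plan is to prove that the five conditions are equivalent by establishing the cyclic chain of implications $(1)\Rightarrow(2)\Rightarrow(3)\Rightarrow(4)\Rightarrow(5)\Rightarrow(1)$, relying throughout on the existence of the positive square root $T^{1/2}$ of a positive operator (continuous functional calculus) together with Theorem \ref{lem: adjoint}. For $(1)\Rightarrow(2)$ I would first note that $mI\leq T$ with $m>0$ forces $T\geq 0$, so $T$ is positive and its positive square root $T^{1/2}$ exists and satisfies $\langle Tf,f\rangle=\Vert T^{1/2}f\Vert^2$. The two-sided estimate $mI\leq T\leq MI$ then reads, after pairing with $f$, exactly as $m\Vert f\Vert^2\leq\Vert T^{1/2}f\Vert^2\leq M\Vert f\Vert^2$.

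For $(2)\Rightarrow(3)$ the decisive observation is that $T^{1/2}$ is self-adjoint, so it coincides with its own adjoint. The lower estimate gives $\Vert T^{1/2}f\Vert\geq\sqrt{m}\,\Vert f\Vert$, which simultaneously shows that $T^{1/2}$ is bounded below, hence injective with closed range, and, via Theorem \ref{lem: adjoint}(ii) applied with $U=U^*=T^{1/2}$, that $T^{1/2}$ is surjective. A bounded bijection has a bounded inverse, so $T^{1/2}\in GL(\mathcal{H})$.

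The remaining implications are short. For $(3)\Rightarrow(4)$ I would simply take $A=T^{1/2}$, which is self-adjoint, positive, invertible by hypothesis, and squares to $T$. For $(4)\Rightarrow(5)$, writing $T=A^2=A^*A$ with $A$ invertible shows $T$ is positive with $\langle Tf,f\rangle=\Vert Af\Vert^2$ and possesses the bounded inverse $(A^{-1})^2$, hence $T\in GL^+(\mathcal{H})$. Finally, for $(5)\Rightarrow(1)$, positivity yields $0\leq T\leq\Vert T\Vert I$, while order-reversal under inversion of positive operators turns $T^{-1}\leq\Vert T^{-1}\Vert I$ into $\Vert T^{-1}\Vert^{-1}I\leq T$; one may therefore take $M=\Vert T\Vert$ and $m=\Vert T^{-1}\Vert^{-1}>0$.

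I expect the only nonroutine step to be $(2)\Rightarrow(3)$, and specifically the deduction of surjectivity of $T^{1/2}$ from the lower bound; this is precisely where Theorem \ref{lem: adjoint}(ii) is invoked, crucially using that $T^{1/2}$ is its own adjoint. All other steps reduce to elementary manipulations with the functional calculus, the identity $\langle Tf,f\rangle=\Vert T^{1/2}f\Vert^2$, and the monotonicity of operator inversion.
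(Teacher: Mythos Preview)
Your proof is correct and self-contained. Note, however, that the paper does not actually supply a proof of this proposition: it is stated in the preliminaries with a citation to \cite{Pa73} and left unproved, as is customary for background results. Your cyclic argument $(1)\Rightarrow(2)\Rightarrow(3)\Rightarrow(4)\Rightarrow(5)\Rightarrow(1)$ is a clean and standard way to establish the equivalences, and the only place where any real work occurs---your step $(2)\Rightarrow(3)$, using self-adjointness of $T^{1/2}$ together with Theorem~\ref{lem: adjoint}(ii) to upgrade the lower bound to surjectivity---is handled correctly. There is nothing to compare against in the paper itself.
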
 
 The following lemma will be used in the sequel.
 \begin{lem}\cite{FYY09}\label{lem: orthogonally complemented}
 Let $\mathcal{F}$, $\mathcal{G}$, $\mathcal{H}$ be Hilbert spaces. Let $T\in B(\mathcal{F},\mathcal{G})$ and $T'\in  B(\mathcal{H},\mathcal{G})$ with $\overline{\mathcal{R}}_{T^*}$ be orthogonally complemented.\\ Then the following statement are equivalent:
 \begin{itemize}
 \item[i)] $T'T'^*\leq \lambda T T^*$ for some $\lambda>0$.
 \item[ii)] There exists $\mu>0$ such that $\Vert T'^*z\Vert \leq \mu \Vert T^*z\Vert$, for all $z\in \mathcal{G}$.
 \end{itemize}
 \end{lem}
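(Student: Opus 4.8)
The plan is to observe that both $T'T'^{*}$ and $TT^{*}$ are positive self-adjoint operators on $\mathcal{G}$, so the operator inequality in (i) is equivalent, by definition of the ordering on self-adjoint operators, to a pointwise inequality between the associated quadratic forms. The one elementary identity driving everything is that for any bounded operator $S$ into $\mathcal{G}$ one has $\langle SS^{*}z,z\rangle=\langle S^{*}z,S^{*}z\rangle=\Vert S^{*}z\Vert^{2}$ for every $z\in\mathcal{G}$. Applied with $S=T'$ and with $S=T$, this identity converts the quadratic forms of $T'T'^{*}$ and $TT^{*}$ directly into the squared norms appearing in (ii).

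For the direction (i) $\Rightarrow$ (ii), I would assume $T'T'^{*}\leq\lambda TT^{*}$ and unfold the operator ordering as $\langle T'T'^{*}z,z\rangle\leq\lambda\langle TT^{*}z,z\rangle$ for all $z\in\mathcal{G}$. Rewriting both sides by the identity above gives $\Vert T'^{*}z\Vert^{2}\leq\lambda\Vert T^{*}z\Vert^{2}$, and taking square roots with $\mu:=\sqrt{\lambda}$ yields the required bound $\Vert T'^{*}z\Vert\leq\mu\Vert T^{*}z\Vert$.

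For the converse (ii) $\Rightarrow$ (i), I would reverse this chain: starting from $\Vert T'^{*}z\Vert\leq\mu\Vert T^{*}z\Vert$, square to get $\Vert T'^{*}z\Vert^{2}\leq\mu^{2}\Vert T^{*}z\Vert^{2}$, then re-express each side via the same identity as $\langle T'T'^{*}z,z\rangle\leq\mu^{2}\langle TT^{*}z,z\rangle$ for all $z\in\mathcal{G}$. Since $T'T'^{*}$ and $TT^{*}$ are self-adjoint, this quadratic-form inequality is exactly the operator inequality $T'T'^{*}\leq\mu^{2}TT^{*}$, so (i) holds with $\lambda:=\mu^{2}$.

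I do not expect a genuine obstacle here, since the whole argument is the standard correspondence between a positive operator and its quadratic form, with the only care needed being the bookkeeping $\mu=\sqrt{\lambda}$. It is worth noting that the hypothesis that $\overline{\mathcal{R}}_{T^{*}}$ is orthogonally complemented is automatically satisfied in a Hilbert space, where every closed subspace admits an orthogonal complement; it plays no active role in the proof and is inherited from the more general Hilbert $C^{*}$-module setting of the cited reference, where such a splitting can fail and where the Douglas-type factorization behind the equivalence becomes delicate.
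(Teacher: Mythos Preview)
Your argument is correct. The paper itself does not supply a proof of this lemma; it is quoted verbatim from \cite{FYY09} and used as a black box, so there is no in-paper proof to compare against. Your observation that the equivalence reduces, in the Hilbert space setting, to the tautological correspondence $\langle SS^{*}z,z\rangle=\Vert S^{*}z\Vert^{2}$ is exactly right, and your remark that the hypothesis on $\overline{\mathcal{R}}_{T^{*}}$ being orthogonally complemented is vacuous here (and only meaningful in the Hilbert $C^{*}$-module context of the original reference) is a useful clarification that the paper omits.
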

  
\section{Controlled $K$-fusion frame}
In this section, we introduce the notion of Controlled $K$-fusion frames in Hilbert spaces and we discuss some their properties.\begin{defi} Let $K\in B(\mathcal{H})$, let $\{W_i\}_{i\in I}$ be a family of closed subspaces of a Hilbert space $\mathcal{H}$, let $\{w_i\}_{i\in I}$ be a family of weights, and let $C, C'\in GL(\mathcal{H})$. Then  $\mathcal{W}=\{W_i,w_i\}_{i\in I}$ is called a $K-$ fusion frame controlled by $C$ and $C'$ or $(C,C')-$controlled $K-$ fusion frame if there exist two constantants  \begin{align*}
0<A_{CC'}\leq B_{CC'}<\infty
\end{align*}
such that \begin{equation}\label{def: ckf}
A_{CC'} \Vert K^* f\Vert^2\leq\sum_{i\in I} w_i^2<\pi_{W_i}C f,\pi_{W_i}C' f>\leq B_{CC'} \Vert  f\Vert^2, f \in\mathcal{H}.
\end{equation}
 Where $K^*$ is the adjoint operator of $K$
\end{defi}
$A_{CC'}$ and $B_{CC'}$ are called lower and upper bounds of a $(C,C')-$controlled $K$-fusion frame respectively.
\begin{enumerate}
\item We call $\mathcal{W}$ a $(C,C')$-controlled Parsval  $K-$ fusion frame if \\$A_{CC'}=B_{CC'}=1$.
\item If only the second inequality of \ref{def: ckf} is required, we call $\mathcal{W}$ a $(C,C')$-controlled  Bessel $K-$fusion sequence with Bessel bound $B$.
\end{enumerate}
\begin{rem}\begin{itemize}
\item[i)]
If $K=I$ ( where is the identity operator), then every $(C,C')$-controlled $K-$ fusion frame is a $(C,C')$-controlled fusion frame.
\item[ii)] If $C=C'=I$, then  every $(C,C')$-controlled $K-$ fusion frame is a $K$-fusion frame.
\item[iii)] Every  $(C,C')$-controlled fusion frame is a $(C,C')$-controlled $K$-fusion frame. Indeed,  by definition \ref{def: ckf} there exist constants  $0<A_{CC'}\leq B_{CC'} $, such that for all $f \in \mathcal{H}$ , we have
\begin{equation*}
A_{CC'} \Vert  f\Vert^2\leq\sum_{i\in I} w_i^2<\pi_{W_i}C f,\pi_{W_i}C' f>\leq B_{CC'} \Vert  f\Vert^2.
\end{equation*}
Therefore, for $\Vert K\Vert>0$, one has \begin{align*}
A_{CC'} \Vert K^* f\Vert^2 \leq A_{CC'} \Vert K^*\Vert^2\Vert  f\Vert^2 \leq A_{CC'} \Vert K\Vert^2\Vert  f\Vert^2,
\end{align*}
that is, \begin{align*}
\frac{A_{CC'}}{\Vert K\Vert^2} \Vert K^* f\Vert^2 \leq A_{CC'} \Vert  f\Vert^2, 
\end{align*}
it follows that,\begin{equation*}
\frac{A_{CC'}}{\Vert K\Vert^2} \Vert  K^* f\Vert^2\leq\sum_{i\in I} w_i^2<\pi_{W_i}C f,\pi_{W_i}C' f>\leq B_{CC'} \Vert  f\Vert^2.
\end{equation*}
Hence, the family $\mathcal{W}$ is a  $(C,C')$-controlled $K-$ fusion frame for $\h$.
\end{itemize}
\end{rem}

The next example shows that in general, frames may be controlled $K$-fusion frame  without being a controlled fusion frame. 
\begin{exa} Let $\h=l_2(\mathbb{C})=\{\{a_n\}_{n\in \mathbb{N}} \subset \mathbb{C}\mid \sum_{n=0}^{+\infty}\mid a_n\mid^2<\infty\}$ be a Hilbert space, with respect to the inner product \begin{align*}
\langle \{a_n\}_{n\in \mathbb{N}},\{b_n\}_{n\in \mathbb{N}}\rangle=\sum_{n\in \mathbb{N}}a_n\overline{b}_n,
\end{align*}
equipped with the norm \begin{align*}
\Vert \{a_n\}_{n\in \mathbb{N}}\Vert_{l_2(\mathbb{C})}=(\sum_{n\in \mathbb{N}}\vert a_n\vert^2)^{\frac{1}{2}}.
\end{align*}
Consider two  operators $C$ and  $C'$  defined by \begin{eqnarray*}
C:\quad \h &\longrightarrow &\h\\
\{a_n\}_{n\in \mathbb{N}} &\longmapsto & \{\alpha a_n\}_{n\in \mathbb{N}}
\end{eqnarray*}
 \begin{eqnarray*}
resp.\quad C':\quad \h &\longrightarrow &\h\\
\{a_n\}_{n\in \mathbb{N}} &\longmapsto & \{\beta a_n\}_{n\in \mathbb{N}}
\end{eqnarray*}
where $\alpha,\beta \in \mathbb{R}^*_+$.\\
Tt is easy to see that:
\begin{itemize}
\item $C$ and  $C'$ are positives.
\item $C$ and  $C'$ are invertibles. 
\end{itemize}
There  invertible operators are given respectivelly by:\begin{eqnarray*}
C^{-1}:\quad \h &\longrightarrow &\h\\
\{a_n\}_{n\in \mathbb{N}} &\longmapsto & \{\alpha^{-1} a_n\}_{n\in \mathbb{N}},
\end{eqnarray*}
and
 \begin{eqnarray*}
\quad C'^{-1}:\quad \h &\longrightarrow &\h\\
\{a_n\}_{n\in \mathbb{N}} &\longmapsto & \{\beta^{-1} a_n\}_{n\in \mathbb{N}}.
\end{eqnarray*}
Let $E_i=\{a_j\}_{j\in \mathbb{N}}$, where $a_j=\{\delta_i^j\}_{i\in \mathbb{N}}$(where $\delta_i^j$ is the Kronecker symbol). Let  $\{W_i\}_{i\in \mathbb{N}}$ be a closed subspaces  of  $\mathcal{H}$ such that $W_i=\mathbb{C} E_i$, and let $w_i=\frac{1}{\sqrt{i+1}}$,  for all $i\in\mathbb{N}$.\\
The family $\mathcal{W}=\{W_i,w_i\}_{i\in \mathbb{N}}$ is a $(C,C')$-controlled  Bessel fusion sequence. Indeed  for each  $\{a_n\}_{n\in \mathbb{N}}\in\h$, we have\begin{eqnarray*}
\sum_{i\in \mathbb{N}} w_i^2\langle \pi_{W_i}C (\{a_n\}_{n\in \mathbb{N}}),\pi_{W_i}C'(\{a_n\}_{n\in \mathbb{N}})\rangle &=&\alpha\beta\sum_{i\in \mathbb{N}}\frac{1}{i+1}\mid a_i\mid^2 \\ &\leq & \alpha\beta\sum_{i\in \mathbb{N}}\mid a_i\mid^2\\
&=& \alpha\beta \Vert\{a_n\}_{n\in \mathbb{N}})\Vert^2_{\h}.
\end{eqnarray*}
But is not $(C,C')$-controlled fusion frame, For this, assume the contrary that exists $A_{CC'}>0$ such that: 
\begin{align}
A_{CC'} \sum_{i\in \mathbb{N}} \mid a_i\mid^2 \leq \sum_{i\in \mathbb{N}}\frac{\alpha\beta}{i+1}\mid a_i\mid^2.
\end{align} Hence
\begin{align*}
\sum_{i\in \mathbb{N}} \mid a_i\mid^2< \infty\Longrightarrow \lim_{i\longrightarrow +\infty} a_i=0
\end{align*}
So, we have 
\begin{eqnarray*}
\mid a_j\mid &\longrightarrow& 0 \quad as\quad j\longrightarrow\infty; \\ 
\frac{\alpha\beta}{i+1}&\longrightarrow& 0 \quad  as \quad i\longrightarrow\infty.
\end{eqnarray*}
$$\Longrightarrow
\left\{
\begin{array}{rl}  \forall \varepsilon \geq 0\quad \exists N\in \mathbb{N}:\quad j\geq N & \mbox{ $\Longrightarrow $} \mid a_j\mid <\varepsilon,\\
\forall \gamma \geq 0\quad \exists N\in \mathbb{N}:\quad j\geq M  &\mbox{$\Longrightarrow$} \frac{\alpha\beta}{j+1} <\gamma .
\end{array}
\right.
$$

\begin{align*}
 \Longrightarrow \quad \sum_{i\in \mathbb{N}}\frac{\alpha\beta -(j+1)A_{CC'}}{j+1}\mid a_j\mid^2 \geq 0.
\end{align*}
By fixing $ \varepsilon =\gamma$, there exist $N,M\in\mathbb{N}^*$, such that
$$
\left\{
\begin{array}{rl}  j\geq N & \mbox{ $\Longrightarrow $}  \mid a_j\mid <\varepsilon,\\
 j\geq M & \mbox{ $\Longrightarrow $} \frac{\alpha\beta}{j+1} <\varepsilon .
\end{array}
\right.
$$

Now, let $N_1=\max(N,M)$, then $\forall j\geq N_1$, $\mid a_j\mid <\varepsilon \quad and \quad \frac{\alpha\beta}{i+1} <\varepsilon$. 
Hence
\begin{align*}
\sum_{i=0}^{N_1-1} \frac{\alpha\beta -(i+1)A_{CC'}}{i+1}\mid a_i\mid^2+\varepsilon^2 \sum_{i=N_1}^{\infty} (\varepsilon -A_{CC'})\geq  0.
\end{align*}
Now,  for $\varepsilon=\frac{A_{CC'}}{2}$, we obtain
  \begin{align*}
\sum_{i=0}^{N_1-1} \frac{\alpha\beta -(i+1)A_{CC'}}{i+1}\mid a_i\mid^2+(\frac{A_{CC'}}{2})^2 \sum_{i=N_1}^{\infty} (\frac{-A_{CC'}}{2})\geq  0.
\end{align*}
absurde.
\\
Now if we considere  the operator  \begin{eqnarray*}
\quad K:\quad \h &\longrightarrow &\h\\
\{a_n\}_{n\in \mathbb{N}} &\longmapsto & \{\frac{a_n}{\sqrt{n+1}} \}_{n\in \mathbb{N}}.
\end{eqnarray*}
then, $K$ is a bounded linear  for $\mathcal{H}$. furthermore, for each  $\{a_n\}_{n\in \mathbb{N}}\in\h$, we have
\begin{align*}
\langle K^* (\{a_n\}_{n\in \mathbb{N}}),K^*(\{a_n\}_{n\in \mathbb{N}})\rangle = \sum_{i=0}^{\infty}\frac{\mid a_n \mid^2}{n+1}.
\end{align*}
So,
\begin{eqnarray*}
\sum_{i=0}^{\infty} \frac{\alpha\beta}{2}\frac{\mid a_n \mid^2}{n+1} \leq \sum_{i=0}^{\infty} \frac{\alpha\beta}{n+1}\mid a_n \mid^2\leq \alpha\beta  \sum_{i=0}^{\infty} \mid a_n \mid^2, 
\end{eqnarray*}
\end{exa}
The following proposition provides a relation between controlled $K$-fusion frames and controlled fusion frames.
\begin{prop}
Let $K\in\mathcal{B}(\h)$ be a closed range operator  $\mathcal{R}_K$. Then, every $(C,C')$-controlled $K-$ fusion frame is a $( C,C')$-controlled fusion frame for $\mathcal{R}_K$.

\end{prop}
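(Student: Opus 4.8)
The plan is to verify the two controlled-fusion-frame inequalities separately for $f \in \mathcal{R}_K$, exploiting that the upper bound already holds on all of $\mathcal{H}$, while the lower bound requires converting the quantity $\Vert K^*f\Vert^2$ appearing in the $K$-fusion-frame estimate into a genuine multiple of $\Vert f\Vert^2$. Since $K$ has closed range by hypothesis, $\mathcal{R}_K$ is a closed (hence Hilbert) subspace, so speaking of a controlled fusion frame \emph{for} $\mathcal{R}_K$ is meaningful, and the inner products $\langle \pi_{W_i}Cf,\pi_{W_i}C'f\rangle$ still make sense for $f\in\mathcal{R}_K$ because $C,C'$ act on $\mathcal{H}$ and the $W_i$ are subspaces of $\mathcal{H}$.

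For the upper bound nothing needs to be done: since $\mathcal{W}$ is a $(C,C')$-controlled $K$-fusion frame, the inequality $\sum_{i\in I} w_i^2 \langle \pi_{W_i} Cf, \pi_{W_i} C'f \rangle \leq B_{CC'}\Vert f\Vert^2$ holds for every $f \in \mathcal{H}$, and in particular for every $f \in \mathcal{R}_K$, so $B_{CC'}$ serves as the upper controlled-fusion-frame bound.

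The core of the argument is the lower bound. Because $K$ has closed range, Lemma \ref{lem: Pseudo-inverse} furnishes a bounded pseudo-inverse $K^\dagger$ with $K K^\dagger x = x$ for all $x \in \mathcal{R}_K$. First I would fix $f \in \mathcal{R}_K$ (the case $f=0$ being trivial) and write $f = K K^\dagger f$; then, using the adjoint, Cauchy--Schwarz, and boundedness of $K^\dagger$,
$$\Vert f\Vert^2 = \langle K K^\dagger f, f \rangle = \langle K^\dagger f, K^* f \rangle \leq \Vert K^\dagger\Vert\,\Vert f\Vert\,\Vert K^* f\Vert.$$
Dividing by $\Vert f\Vert$ and squaring yields $\Vert f\Vert^2 \leq \Vert K^\dagger\Vert^2 \Vert K^* f\Vert^2$, that is, $\Vert K^* f\Vert^2 \geq \Vert K^\dagger\Vert^{-2}\Vert f\Vert^2$. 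Feeding this into the lower $K$-fusion-frame inequality gives
$$\frac{A_{CC'}}{\Vert K^\dagger\Vert^2}\Vert f\Vert^2 \leq A_{CC'}\Vert K^* f\Vert^2 \leq \sum_{i\in I} w_i^2 \langle \pi_{W_i} Cf, \pi_{W_i} C'f \rangle, \quad f \in \mathcal{R}_K,$$
so that $A_{CC'}/\Vert K^\dagger\Vert^2$ is a valid lower controlled-fusion-frame bound on $\mathcal{R}_K$.

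The main obstacle is establishing the reverse control $\Vert f\Vert \leq \Vert K^\dagger\Vert\,\Vert K^* f\Vert$ on $\mathcal{R}_K$; everything hinges on having closed range, so that $K^\dagger$ exists and is bounded. Once that estimate is in hand the result follows at once, with explicit constants $A = A_{CC'}/\Vert K^\dagger\Vert^2$ and $B = B_{CC'}$.
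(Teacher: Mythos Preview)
Your proof is correct and follows essentially the same route as the paper. Both arguments restrict to $f\in\mathcal{R}_K$, keep the upper bound as is, and for the lower bound use the pseudo-inverse to show $\Vert f\Vert \leq \Vert K^\dagger\Vert\,\Vert K^* f\Vert$; the paper obtains this via $(K^*)^\dagger K^* f = f$ on $\mathcal{R}_K$ and the operator norm, while you reach the same inequality through $f = KK^\dagger f$ and Cauchy--Schwarz, yielding the identical constant $A_{CC'}/\Vert K^\dagger\Vert^{2}$ (note $\Vert (K^*)^\dagger\Vert = \Vert K^\dagger\Vert$).
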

\begin{proof}
 Let $\mathcal{W}=\{(W_i,w_i)\}_{i\in I}$ be a $(C,C')$-controlled $K-$ fusion frame with frame bounds $A_{CC'}$ and $B_{CC'}$. Then for all $f \in \mathcal{R}_K $, we have
\begin{equation*}
A_{CC'} \Vert  K^* f\Vert^2\leq\sum_{i\in I} w_i^2<\pi_{W_i}C f,\pi_{W_i}C' f>\leq B_{CC'} \Vert  f\Vert^2.
\end{equation*}
Therefore, via lemma \ref{lem: Pseudo-inverse}, we have
\begin{align*}
A_{CC'} \Vert   f\Vert^2\leq  A_{CC'}\Vert  (K^*)^\dagger f\Vert^2 \Vert  K^* f\Vert^2.
\end{align*}
Hence,\begin{align*}
\frac{A_{CC'}}{\Vert  (K^*)^\dagger \Vert^2} \Vert   f\Vert^2\leq A_{CC'}\Vert  K^* f\Vert^2.
\end{align*}
Thus, \begin{equation*}
\frac{A_{CC'}}{\Vert  (K^*)^\dagger \Vert^2} \Vert   f\Vert^2 \leq\sum_{i\in I} w_i^2<\pi_{W_i}C f,\pi_{W_i}C' f>\leq B_{CC'} \Vert  f\Vert^2.
\end{equation*}
So, we have the result.
\end{proof}
If $\mathcal{W}$ is a $(C,C')$-controlled $K-$ fusion frame and $C'^*\pi_{W_i}C$ is a positive operator for each $i\in I$, then $C'^*\pi_{W_i}C=C^*\pi_{W_i} C'$ and we have \begin{align*}
 A_{CC'} \Vert K^* f\Vert^2\leq\sum_{i\in I} w_i^2\Vert (C'^*\pi_{W_i}C)^{\frac{1}{2} }f\Vert^2\leq B_{CC'} \Vert  f\Vert^2,  f \in \mathcal{H}.
\end{align*}
 Indeed, 
  \begin{eqnarray*}
  \sum_{i\in I} w_i^2 \langle \pi_{W_i}C f,\pi_{W_i}C' f\rangle &=& \sum_{i\in I} w_i^2  \langle C'^*\pi_{W_i}C f, f\rangle\\
  &=& \sum_{i\in I} w_i^2 \langle (C'^*\pi_{W_i}C)^{\frac{1}{2}}f, (C'^*\pi_{W_i}C)^{\frac{1}{2}}f \rangle\\
  &=& \sum_{i\in I} w_i^2 \Vert (C'^*\pi_{W_i}C)^{\frac{1}{2} }f\Vert^2.
  \end{eqnarray*}
 We define the controlled analysis operator by 
 \begin{eqnarray*}
  T_{CC'}:\quad \mathcal{H} &\longrightarrow & \mathcal{K}\\ f &\longmapsto &T_{CC'}(f):=(w_i (C'^*\pi_{W_i}C)^{\frac{1}{2}}f)_{i\in I},
\end{eqnarray*}
 Where\begin{align*}
 \mathcal{K}=\{(w_i (C'^*\pi_{W_i}C)^{\frac{1}{2}}f)_{i\in I}  \vert f\in \mathcal{H}\}\subseteq (\oplus_{i\in I}\mathcal{H})_{l^2}.
 \end{align*}
 $\mathcal{K}$ is closed \cite{KM12} and $ T_{CC'}$ is well defined. Morever $T_{CC'}$ is a bounded linear operator. Its adjoint operator is given by   \begin{eqnarray*}
  T^*_{CC'}:\quad \mathcal{K} &\longrightarrow & \mathcal{H}\\(w_i (C'^*\pi_{W_i}C)^{\frac{1}{2}}f)_{i\in I}) &\longmapsto &T^*_{CC'}((w_i (C'^*\pi_{W_i}C)^{\frac{1}{2}}f)_{i\in I}):=\sum_{i\in I} w_i^2C'^*\pi_{W_i}C f,
\end{eqnarray*}
and is called the controlled synthesis operator.\\
Therefore, we define the controlled $K$-fusion frame operator $S_{CC'}$ on $\mathcal{H}$ by\begin{align}\label{def:opers}
S_{CC'}=T^*_{CC'}T_{CC'}(f)=\sum_{i\in I} w_i^2 C'^*\pi_{W_i}C f, \quad  f\in\mathcal{H}.
\end{align}

In fact, many of the properties of the ordinary $K$-fusion frames are valid in this case.
\begin{lem}
 Let  $\mathcal{W}=\{W_i,w_i\}_{i\in I}$ be a $(C,C')$-controlled $K$-fusion frame with bounds $A_{CC'}$ and $B_{CC'}$. Then the operator  $S_{CC'}$  (\ref{def:opers})  is a well defined, linear, positive, bounded and self-adjoint operator. furthermore, we have \begin{align}
 A_{CC'} K K^* \leq S_{CC'}\leq B_{CC'}Id_{\mathcal{H}}.
 \end{align}
\end{lem}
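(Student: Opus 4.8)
The plan is to exploit the factorization $S_{CC'} = T^*_{CC'} T_{CC'}$ recorded in \eqref{def:opers}, so that the structural properties follow for free from general operator theory and only the two-sided estimate genuinely uses the frame hypothesis. First I would recall that $T_{CC'}$ is a well-defined bounded linear operator from $\mathcal{H}$ into the closed subspace $\mathcal{K} \subseteq (\oplus_{i\in I}\mathcal{H})_{l^2}$; by Theorem \ref{lem: adjoint}(i) its adjoint $T^*_{CC'}$ is then bounded with $\Vert T^*_{CC'}\Vert = \Vert T_{CC'}\Vert$. Consequently $S_{CC'} = T^*_{CC'}T_{CC'}$ is a composition of bounded linear maps, hence well defined, linear and bounded, and in particular the defining series $\sum_{i\in I} w_i^2 C'^*\pi_{W_i}C f$ converges for every $f\in\mathcal{H}$. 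Self-adjointness is immediate from $S_{CC'}^* = (T^*_{CC'}T_{CC'})^* = T^*_{CC'}(T^*_{CC'})^* = T^*_{CC'}T_{CC'} = S_{CC'}$, while positivity follows from $\langle S_{CC'}f, f\rangle = \langle T^*_{CC'}T_{CC'}f, f\rangle = \Vert T_{CC'}f\Vert^2 \ge 0$ for all $f \in \mathcal{H}$.

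For the operator inequality I would pass to the quadratic form of $S_{CC'}$. Expanding $\langle S_{CC'}f, f\rangle = \sum_{i\in I} w_i^2 \langle C'^*\pi_{W_i}C f, f\rangle$ and using that $\pi_{W_i}$ is a self-adjoint idempotent, each summand rewrites as $\langle \pi_{W_i}C f, \pi_{W_i}C' f\rangle$, so that
$$\langle S_{CC'}f, f\rangle = \sum_{i\in I} w_i^2 \langle \pi_{W_i}C f, \pi_{W_i}C' f\rangle.$$
This is exactly the middle term of the defining inequality \eqref{def: ckf}, which therefore gives $A_{CC'}\Vert K^* f\Vert^2 \le \langle S_{CC'}f, f\rangle \le B_{CC'}\Vert f\Vert^2$ for every $f$. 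Finally I would rewrite the lower term as $\Vert K^* f\Vert^2 = \langle K^* f, K^* f\rangle = \langle K K^* f, f\rangle$ and the upper term as $B_{CC'}\Vert f\Vert^2 = \langle B_{CC'} Id_{\mathcal{H}} f, f\rangle$, obtaining $\langle A_{CC'} K K^* f, f\rangle \le \langle S_{CC'}f, f\rangle \le \langle B_{CC'} Id_{\mathcal{H}} f, f\rangle$ for all $f \in \mathcal{H}$, which is precisely the claimed ordering $A_{CC'} K K^* \le S_{CC'} \le B_{CC'} Id_{\mathcal{H}}$.

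Most of this is routine once the factorization is in place; the point deserving care is the standing hypothesis that $C'^*\pi_{W_i}C$ be positive for each $i$, which legitimizes the square root $(C'^*\pi_{W_i}C)^{\frac{1}{2}}$ defining $T_{CC'}$ and yields the identity $C'^*\pi_{W_i}C = C^*\pi_{W_i}C'$, ensuring the quadratic form is real. I expect the only mild obstacle to be the bookkeeping: justifying the termwise manipulation $\langle C'^*\pi_{W_i}C f, f\rangle = \langle \pi_{W_i}C f, \pi_{W_i}C' f\rangle$ and the interchange of the infinite sum with the inner product, both of which are controlled by the same Bessel bound used to establish boundedness of $T_{CC'}$.
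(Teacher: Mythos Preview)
Your proposal is correct and follows essentially the same route as the paper. The paper dispatches well-definedness, linearity, boundedness, positivity and self-adjointness in a single sentence (``by definition'' and ``it is clear''), implicitly relying on the same factorization $S_{CC'}=T^*_{CC'}T_{CC'}$ from \eqref{def:opers} that you spell out; for the operator inequality it likewise rewrites the frame condition \eqref{def: ckf} as $A_{CC'}\langle KK^*f,f\rangle\le\langle S_{CC'}f,f\rangle\le B_{CC'}\langle f,f\rangle$, exactly as you do.
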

\begin{proof}
\begin{itemize}
\item By definition, $S_{CC'}$ is  a linear bounded and  well defined operator, and it is clear to see that $S_{CC'}$ is a positive and self-adjoint operator. 
\item The family $\mathcal{W}=\{W_i,w\}_{i\in I}$ is a $(C,C')$-controlled $K-$fusion frame for $\mathcal{H}$ with bounds $A_{CC'}$ and $B_{CC'}$ if and only if
 $$ A_{CC'}\Vert K^* f\Vert^2\leq \langle  S_{CC'}f,f\rangle= \langle\sum_{i\in I} w_i^2 C'^*\pi_{W_i}C f,f\rangle \leq B_{CC'} \Vert f\Vert^2, \quad  f \in \mathcal{H},$$ 
that is,\begin{align*}
A_{CC'}\langle KK^* f,f\rangle\leq \langle S_{CC'}f,f\rangle \leq B_{CC'} \langle f,f\rangle, \quad  f \in \mathcal{H}.
\end{align*}
Hence,\begin{align*}
A_{CC'} KK^* \leq  S_{CC'} \leq  B_{CC'}.Id_{\mathcal{H}}, 
\end{align*}
so the conclusion holds. 
\end{itemize}
\end{proof}

The next theorem generalizes  the situation of controlled  Bessel $K-$fusion sequence. Since it has similar procedure, the proof is omitted.
  \begin{thm}\label{thm: operatobiendefini}
 $\mathcal{W}$ is a $(C,C')$-controlled  Bessel $K-$fusion sequence with bound $B_{CC'}$ if and only if \begin{eqnarray*}
  T^*_{CC'}:\quad \mathcal{K} &\longrightarrow & \mathcal{H}\\ (w_i (C'^*\pi_{W_i}C)^{\frac{1}{2}}f)_{i\in I}&\longmapsto &T^*_{CC'}((w_i (C'^*\pi_{W_i}C)^{\frac{1}{2}}f)_{i\in I}):=\sum_{i\in I} w_i^2C'^*\pi_{W_i}C f,
\end{eqnarray*} 
is well-defined bounded operator and $\Vert T^*_{CC'}\Vert\leq\sqrt{B}$.
 \end{thm}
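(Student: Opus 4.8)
The plan is to read the Bessel condition, via the norm identity recorded just before the statement, as the boundedness of the controlled analysis operator $T_{CC'}$ with $\Vert T_{CC'}\Vert\le\sqrt{B_{CC'}}$, and then to pass to $T^*_{CC'}$ using the fact that an operator and its adjoint have equal norm (Theorem \ref{lem: adjoint}). Throughout I keep the standing assumption that each $C'^*\pi_{W_i}C$ is positive, so that $(C'^*\pi_{W_i}C)^{1/2}$ is defined and self-adjoint and the identity
\[
\sum_{i\in I} w_i^2\langle \pi_{W_i}Cf,\pi_{W_i}C'f\rangle=\sum_{i\in I} w_i^2\Vert (C'^*\pi_{W_i}C)^{1/2}f\Vert^2=\Vert T_{CC'}f\Vert^2
\]
is available.

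For the direct implication I start from $\sum_{i\in I} w_i^2\langle \pi_{W_i}Cf,\pi_{W_i}C'f\rangle\le B_{CC'}\Vert f\Vert^2$. Finiteness of the left-hand side shows that $T_{CC'}f\in\mathcal{K}\subseteq(\oplus_{i\in I}\mathcal{H})_{l^2}$, so $T_{CC'}$ is well defined, and the inequality reads $\Vert T_{CC'}f\Vert\le\sqrt{B_{CC'}}\Vert f\Vert$; hence $T_{CC'}$ is bounded with $\Vert T_{CC'}\Vert\le\sqrt{B_{CC'}}$. By Theorem \ref{lem: adjoint}(i) the adjoint $T^*_{CC'}$ exists, is bounded, and satisfies $\Vert T^*_{CC'}\Vert=\Vert T_{CC'}\Vert\le\sqrt{B_{CC'}}$.

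To see that this adjoint is exactly the operator displayed in the statement, I compute, for $f,h\in\mathcal{H}$,
\[
\langle T_{CC'}f,T_{CC'}h\rangle=\sum_{i\in I} w_i^2\langle (C'^*\pi_{W_i}C)^{1/2}f,(C'^*\pi_{W_i}C)^{1/2}h\rangle=\sum_{i\in I} w_i^2\langle C'^*\pi_{W_i}Cf,h\rangle,
\]
using self-adjointness of $(C'^*\pi_{W_i}C)^{1/2}$; this equals $\langle \sum_{i\in I} w_i^2 C'^*\pi_{W_i}Cf,h\rangle$, and comparing with $\langle T_{CC'}f,T_{CC'}h\rangle=\langle f,T^*_{CC'}(T_{CC'}h)\rangle$ identifies $T^*_{CC'}$ with the stated formula and shows the defining series converges. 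For the converse, if $T^*_{CC'}$ is a well-defined bounded operator with $\Vert T^*_{CC'}\Vert\le\sqrt{B_{CC'}}$, then $T_{CC'}=(T^*_{CC'})^*$ is bounded with the same norm by Theorem \ref{lem: adjoint}(i); squaring $\Vert T_{CC'}f\Vert\le\sqrt{B_{CC'}}\Vert f\Vert$ and expanding $\Vert T_{CC'}f\Vert^2$ through the identity above recovers the Bessel inequality, so $\mathcal{W}$ is a $(C,C')$-controlled Bessel $K$-fusion sequence with bound $B_{CC'}$.

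The two norm estimates are routine; the step demanding care is well-definedness. One must check that $T_{CC'}$ is single-valued and that the formula for $T^*_{CC'}$ does not depend on the chosen representative $f$ in $g=(w_i(C'^*\pi_{W_i}C)^{1/2}f)_{i\in I}$. This is exactly where positivity (hence self-adjointness) of $C'^*\pi_{W_i}C$ enters: it guarantees $C'^*\pi_{W_i}C=C^*\pi_{W_i}C'$, without which the displayed identity and the adjoint computation both fail, so I would flag this hypothesis explicitly rather than leave it implicit as in the omitted proof.
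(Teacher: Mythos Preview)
Your proof is correct and is exactly the standard argument the paper has in mind. The paper in fact omits the proof entirely (``Since it has similar procedure, the proof is omitted''), deferring to the machinery set up just before the statement: the identity $\sum_{i\in I} w_i^2\langle \pi_{W_i}Cf,\pi_{W_i}C'f\rangle=\Vert T_{CC'}f\Vert^2$ under the positivity hypothesis on $C'^*\pi_{W_i}C$, together with the adjoint-norm equality from Theorem~\ref{lem: adjoint}(i). Your write-up makes this routine step explicit, including the identification of the adjoint via the pairing computation, and you are right to flag that the positivity assumption on $C'^*\pi_{W_i}C$ is essential and should be stated rather than left implicit.
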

  Controlled K-fusion frame operator of $(C,C')$-controlled $K$-fusion frame is not invertible in general, but we can show that it is invertible on the subspace $\mathcal{R}_K \subset \mathcal{H}$. In fact, since $\mathcal{R}_K$ is closed \begin{align*}
 KK^\dagger\mid_{\mathcal{R}_K}= id_{\mathcal{R}_K},
 \end{align*}
so we have \begin{align*}
 id^*_{\mathcal{R}_K}=(K^\dagger\mid_{\mathcal{R}_K})^*K^*.
\end{align*}
Hence for any $f\in\mathcal{R}_K $
\begin{eqnarray*}
\Vert f\Vert =\Vert (K^\dagger\mid_{\mathcal{R}_K})^*K^* f\Vert\leq \Vert K^\dagger\Vert \Vert K^* f\Vert,
\end{eqnarray*}
that is, $\Vert K^* f\Vert^2\geq \Vert K^\dagger\Vert^{-2}\Vert \Vert f \Vert^2$. Combined whith \ref{def: ckf} we have \begin{align*}
\langle S_{CC'}f,f\rangle \geq A_{CC'}\Vert K^* f\Vert^2\geq A_{CC'}\Vert K^\dagger\Vert^{2}\Vert f\Vert^2,\quad \forall f\in\mathcal{R}_K. 
\end{align*}
So from the definition of $(C,C')$-controlled $K$-fusion frame, which implies that $S:\quad \mathcal{R}_K\longrightarrow S(\mathcal{R}_K)$ is an isomorphism, furthermore we have\begin{align*}
B^{-1}_{CC'}\Vert f\Vert \leq \Vert S^{-1} f\Vert\leq A^{-1}_{CC'}\Vert K^\dagger\Vert^{2}\Vert f\Vert,\forall f\in (S(\mathcal{R}_K)).
\end{align*}

\begin{thm}
 Let $K\in B(\mathcal{H})$ be a closed range operator $\mathcal{R}_K$, $\mathcal{W}$ is a $(C,C)$-controlled  $K-$fusion frame with bounds $A_{CC'}$ and $B_{CC'}$ if and only if \begin{eqnarray*}
  T^*_{CC'}:\quad\quad \mathcal{K} &\longrightarrow & \mathcal{H}\\ (w_i (C'^*\pi_{W_i}C)^{\frac{1}{2}}f)_{i\in I}&\longmapsto &T^*_{CC'}((w_i (C'^*\pi_{W_i}C)^{\frac{1}{2}}f)_{i\in I}):=\sum_{i\in I} w_i^2C'^*\pi_{W_i}C f,
\end{eqnarray*} 
is well-defined and surjective
\end{thm}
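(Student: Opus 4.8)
The plan is to prove the two implications separately, exploiting the factorization $S_{CC'}=T^*_{CC'}\,(T^*_{CC'})^*$ together with the operator inequality $A_{CC'}KK^*\le S_{CC'}\le B_{CC'}\,Id_{\mathcal H}$ already recorded (in the preceding unnumbered lemma) for a controlled $K$-fusion frame. Throughout I will use that $(T^*_{CC'})^*=T_{CC'}$, so that $\langle S_{CC'}f,f\rangle=\|T_{CC'}f\|^2$, and that $\overline{\mathcal R}_{T_{CC'}}$, being a closed subspace of a Hilbert space, is automatically orthogonally complemented; hence the standing hypothesis of Lemma \ref{lem: orthogonally complemented} is met once I put $T=T^*_{CC'}$ and $T'=K$.

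For the direction assuming $\mathcal W$ is a $(C,C')$-controlled $K$-fusion frame with bounds $A_{CC'},B_{CC'}$: the upper inequality in \eqref{def: ckf} says precisely that $\mathcal W$ is a controlled Bessel $K$-fusion sequence with bound $B_{CC'}$, so Theorem \ref{thm: operatobiendefini} yields that $T^*_{CC'}$ is a well-defined bounded operator. For the surjectivity I would rewrite the lower inequality as $\sqrt{A_{CC'}}\,\|K^*f\|\le \|T_{CC'}f\|=\|(T^*_{CC'})^*f\|$ for all $f\in\mathcal H$, which is exactly statement (ii) of Lemma \ref{lem: orthogonally complemented} with $T=T^*_{CC'}$, $T'=K$. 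Hence statement (i) holds, i.e. $KK^*\le \lambda\,T^*_{CC'}(T^*_{CC'})^*$ for some $\lambda>0$; by Douglas' range–inclusion theorem this forces $\mathcal R_K\subseteq \mathcal R_{T^*_{CC'}}$, which, since $\mathcal R_K$ is closed, is the asserted surjectivity of $T^*_{CC'}$ onto $\mathcal R_K$.

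Conversely, if $T^*_{CC'}$ is well-defined and bounded, Theorem \ref{thm: operatobiendefini} returns the upper bound $B_{CC'}$. Assuming surjectivity in the sense $\mathcal R_K\subseteq \mathcal R_{T^*_{CC'}}$, Douglas' factorization gives $KK^*\le\lambda\,T^*_{CC'}(T^*_{CC'})^*=\lambda S_{CC'}$, and then the implication (i)$\Rightarrow$(ii) of Lemma \ref{lem: orthogonally complemented} produces $\mu>0$ with $\|K^*f\|\le\mu\|T_{CC'}f\|$, i.e. $\mu^{-2}\|K^*f\|^2\le\langle S_{CC'}f,f\rangle$. Taking $A_{CC'}=\mu^{-2}$ recovers the lower bound, so $\mathcal W$ is a $(C,C')$-controlled $K$-fusion frame.

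The main obstacle is the correct reading of the word ``surjective''. For a genuine $K$-fusion frame the synthesis operator need \emph{not} be onto all of $\mathcal H$: the discussion preceding the theorem shows that $S_{CC'}$ is invertible only on $\mathcal R_K$, and the naive route through Theorem \ref{lem: adjoint}(ii) would demand $\|T_{CC'}f\|\ge A\|f\|$ for \emph{all} $f\in\mathcal H$, i.e. a full lower bound, which is strictly stronger than the $K$-fusion frame condition. Thus the statement must be understood as surjectivity onto $\mathcal R_K$, equivalently $\mathcal R_K\subseteq\mathcal R_{T^*_{CC'}}$, and the two delicate points are (a) upgrading the norm inequality of Lemma \ref{lem: orthogonally complemented} to this range inclusion via Douglas' theorem, and (b) using the closedness of $\mathcal R_K$ (the standing hypothesis that $K$ has closed range) to express the inclusion as surjectivity; indeed without closed range neither the pseudo-inverse of Lemma \ref{lem: Pseudo-inverse} nor the clean isomorphism $S_{CC'}|_{\mathcal R_K}$ is available.
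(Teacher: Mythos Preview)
Your argument is correct, but it follows a genuinely different route from the paper's. The paper never invokes Lemma~\ref{lem: orthogonally complemented} or Douglas' factorization; it works entirely through pseudo-inverses (Lemma~\ref{lem: Pseudo-inverse}) and reads ``surjective'' literally as onto all of $\mathcal H$. For the forward direction the paper argues from $A_{CC'}\|K^*f\|^2\le\langle S_{CC'}f,f\rangle\le\|T_{CC'}\|\,\|T_{CC'}f\|\,\|f\|$, then uses the pseudo-inverse of $K^*$ to write $\|f\|\le\|(K^*)^\dagger\|\,\|K^*f\|$, obtaining a bound $\|f\|\lesssim\|T_{CC'}f\|$ for all $f$ and hence surjectivity of $T^*_{CC'}$ via Theorem~\ref{lem: adjoint}(ii). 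For the converse, full surjectivity onto $\mathcal H$ gives a genuine right inverse $(T^*_{CC'})^\dagger$, so $T^\dagger_{CC'}T_{CC'}=Id$ and $\|K^*f\|^2\le\|K\|^2\|T^\dagger_{CC'}\|^2\|T_{CC'}f\|^2$ yields the lower bound directly.

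Your interpretation is the sharper one, and your last paragraph pinpoints exactly the soft spot in the paper's argument: the step $\|f\|\le\|(K^*)^\dagger\|\,\|K^*f\|$ is only valid for $f\in\mathcal R_{K^*}$, so the paper's forward implication does not actually deliver surjectivity onto all of $\mathcal H$ in general (and the paper's own Example, a controlled $K$-fusion frame that is not a controlled fusion frame, confirms this). Your Douglas-based route avoids this by aiming only at the range inclusion $\mathcal R_K\subseteq\mathcal R_{T^*_{CC'}}$, which is the statement that is actually equivalent to the controlled $K$-fusion frame condition. The trade-off is that you import Douglas' theorem, which the paper does not state (Lemma~\ref{lem: orthogonally complemented} gives the equivalence $KK^*\le\lambda TT^*\Leftrightarrow\|K^*f\|\le\mu\|T^*f\|$, but not the further equivalence with $\mathcal R_K\subseteq\mathcal R_T$), whereas the paper's pseudo-inverse computation is self-contained once the stronger reading of ``surjective'' is granted, and its converse is marginally simpler since a true right inverse is available.
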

\begin{proof}
Let the sequence  $\mathcal{W}$ be a $(C,C')$-controlled  $K-$fusion frame for $\h$, and let $S_{CC'}$ be its controlled  $K-$fusion frame operator. Then, it is a $(C,C')$-controlled  Bessel $K-$fusion sequence  and therefore, by Theorem \ref{thm: operatobiendefini} , the bounded operator $T^*_{CC'}$ is well-defined.
It remains to show that  
By definition, for each $f\in\h$, we have 
\begin{equation*}
A_{CC'} \Vert  K^* f\Vert^2\leq\sum_{i\in I} w_i^2<\pi_{W_i}C f,\pi_{W_i}C' f>\leq B_{CC'} \Vert  f\Vert^2.
\end{equation*}
In particular, we have 
\begin{equation*}
A_{CC'} \Vert  K^* f\Vert^2\leq \langle S_{CC'}f, f\rangle\leq \Vert S_{CC'}\Vert \Vert  f\Vert.
\end{equation*}
Since, $\quad S_{CC'}= T^*T$, then\begin{align*}
\Vert S_{CC'}\Vert \Vert  f\Vert \leq \Vert T \Vert \Vert Tf\Vert \Vert f\Vert.
\end{align*}
Hence,\begin{align*}
A_{CC'}\Vert T \Vert^{-1} \Vert  K^* f\Vert^2\leq \Vert Tf\Vert \Vert f\Vert.
\end{align*}
Therefore, via lemma \ref{lem: Pseudo-inverse}, we have
\begin{align*}
A_{CC'} \Vert   f\Vert^2\leq  A_{CC'}\Vert  (K^*)^\dagger \Vert^2 \Vert  K^* f\Vert^2.
\end{align*}
Hence, \begin{equation*}
\frac{A_{CC'}}{\Vert  (K^*)^\dagger \Vert^2}  \Vert  f\Vert^2 \leq A_{CC'}\Vert  K^* f\Vert^2.
\end{equation*}
\begin{align*}
\frac{A_{CC'}}{\Vert T \Vert \Vert  (K^*)^\dagger \Vert^2} \Vert   f\Vert \leq \Vert Tf\Vert  
\end{align*}
Thus, $T^*_{CC'}$ is surjective.\\
Conversely, let $T^*_{CC'}$ be a well-defined, bounded and surjective, then theorem \ref{thm: operatobiendefini} shows that  $ \mathcal{W}$ is a $(C,C')$-controlled  Bessel $K$-fusion sequence for $\h$.  Therefore, for each $f\in\h$, since $T^*_{CC'}$ is surjective, then, by Lemma \ref{lem: Pseudo-inverse}, there exists an operator  $(T_{CC'}^*)^\dagger :\quad \h \longrightarrow \mathcal{K}$, such that   \begin{align*}
 T_{CC'}^*(T_{CC'}^*)^\dagger= id. 
 \end{align*}
 Hence, \begin{align*}
 T_{CC'}^\dagger T_{CC'} = id. 
 \end{align*}
So, for each $f\in\h$, we have
 \begin{eqnarray*}
 \Vert   K^* f\Vert^2 &\leq & \Vert K \Vert^2  \Vert T_{CC'}^\dagger \Vert^2 \Vert T_{CC'} \Vert^2\\
&=& \Vert T_{CC'}^\dagger \Vert^2  \Vert K \Vert^2  \sum_{i\in I} w_i^2<\pi_{W_i}C f,\pi_{W_i}C' f>.
\end{eqnarray*}
Therefore, $\mathcal{W}$ is a $(C,C')$-controlled  $K-$fusion frame for $\h$.
\end{proof}
\begin{prop}
Let $K\in B(\mathcal{H})$, $C,C'\in GL^+(\h)$ and let $\mathcal{W}$ be a $(C,C')$-controlled  $K-$fusion frame for $\h$ with bounds $A_{CC'}$ and $B_{CC'}$ with $\overline{\mathcal{R}}_{T^*}$ is orthogonally complemented. If $T\in B(\h)$ with $\mathcal{R}_{T}\subset \mathcal{R}_{K}$. Then $\mathcal{W}$ is a $(C,C')$-controlled  $T$-fusion frame for $\h$.
\end{prop}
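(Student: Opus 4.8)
The plan is to notice that replacing $K$ by $T$ does not touch the right-hand (Bessel) side of (\ref{def: ckf}): the upper bound $B_{CC'}\|f\|^2$ involves neither $K$ nor $T$. Hence $\mathcal{W}$ remains a $(C,C')$-controlled Bessel fusion sequence with the same bound $B_{CC'}$, and the entire problem collapses to producing a new lower bound, i.e. a constant $A'_{CC'}>0$ with
$$A'_{CC'}\|T^*f\|^2 \le \sum_{i\in I} w_i^2\langle \pi_{W_i}Cf,\pi_{W_i}C'f\rangle,\qquad f\in\mathcal{H}.$$
Since the hypothesis already supplies $A_{CC'}\|K^*f\|^2$ on the left, it suffices to find a single $\mu>0$ such that $\|T^*f\|\le\mu\|K^*f\|$ for every $f$; then $A'_{CC'}=A_{CC'}/\mu^2$ does the job.

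The first step is to convert the range inclusion $\mathcal{R}_T\subset\mathcal{R}_K$ into an operator-norm comparison. By the classical range-inclusion (Douglas) theorem, $\mathcal{R}_T\subset\mathcal{R}_K$ is equivalent to a factorization $T=KX$ for some $X\in B(\mathcal{H})$, and equivalently to $TT^*\le\lambda\,KK^*$ for some $\lambda>0$. I would then invoke Lemma \ref{lem: orthogonally complemented} with the roles assigned so that our $K$ plays the part of the lemma's $T$ and our $T$ plays the part of $T'$: with this identification, condition (i) of the lemma reads $TT^*\le\lambda\,KK^*$, and its conclusion (ii) yields exactly the desired $\mu>0$ with $\|T^*f\|\le\mu\|K^*f\|$ for all $f\in\mathcal{H}$. (Alternatively, one reads $\mu=\|X\|$ off $T^*=X^*K^*$ directly, but routing through the lemma keeps the argument within the tools already developed.)

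Assembling everything, the chain
$$\frac{A_{CC'}}{\mu^2}\,\|T^*f\|^2 \le A_{CC'}\|K^*f\|^2 \le \sum_{i\in I} w_i^2\langle \pi_{W_i}Cf,\pi_{W_i}C'f\rangle \le B_{CC'}\|f\|^2$$
displays $\mathcal{W}$ as a $(C,C')$-controlled $T$-fusion frame with bounds $A_{CC'}/\mu^2$ and $B_{CC'}$, which is the claim. I expect the only delicate point to be the bookkeeping of which operator carries the orthogonal-complementation hypothesis: the lemma must be applied to the \emph{larger}-range operator, so the relevant condition is that $\overline{\mathcal{R}}_{K^*}$ be orthogonally complemented, and I would make this identification explicit. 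In the present Hilbert-space setting this is in any case automatic, since every closed subspace of $\mathcal{H}$ is orthogonally complemented, so the hypothesis imposes no genuine restriction. The positivity $C,C'\in GL^+(\h)$ plays no role in transferring the lower bound and is merely inherited from the ambient controlled-frame structure.
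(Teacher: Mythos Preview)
Your proof is correct and follows essentially the same route as the paper: use the range inclusion $\mathcal{R}_T\subset\mathcal{R}_K$ to obtain $TT^*\le\lambda KK^*$ (the paper appeals to Lemma~\ref{lem: orthogonally complemented} for this, while you more carefully invoke the Douglas factorization first), deduce $\|T^*f\|^2\le\lambda\|K^*f\|^2$, and combine with the given lower bound to conclude that $\mathcal{W}$ is a $(C,C')$-controlled $T$-fusion frame with bounds $A_{CC'}/\lambda$ and $B_{CC'}$. Your side remarks---that in a genuine Hilbert space the orthogonal-complementation hypothesis is automatic, and that the positivity of $C,C'$ is not used in this step---are accurate observations not made explicit in the paper.
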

\begin{proof}
Assume that $\mathcal{W}$ be a $(C,C')$-controlled  $K-$fusion frame for $\h$ with bounds $A_{CC'}$ and $B_{CC'}$. Then for each $f\in\h$, we have \begin{align*}
A_{CC'} \langle K^* f, K^* f \rangle\leq\sum_{i\in I} w_i^2\langle \pi_{W_i}C f,\pi_{W_i}C f\rangle\leq B_{CC'} \langle  f,f\rangle.
\end{align*} 
Since  $\mathcal{R}_{T}\subset \mathcal{R}_{K}$, so by using lemma \ref{lem: orthogonally complemented} , there exists some $ \lambda >0$ such that \begin{align*}
 TT^*\leq \lambda K K^*.
\end{align*}
This implies that for all $f\in \h$, we have \begin{align*}
A_{CC'} \langle T^* f, T^* f \rangle\leq A_{CC'} \lambda\langle K^* f, K^* f \rangle.
\end{align*}
Therfore, \begin{align*}
\frac{A_{CC'}}{\lambda} \langle T^* f, T^* f \rangle\leq  A_{CC'} \lambda\langle K^* f, K^* f \rangle  \leq\sum_{i\in I} w_i^2\langle \pi_{W_i}C f,\pi_{W_i}C f\rangle\leq B_{CC'} \langle  f,f\rangle.
\end{align*}
Then, $\mathcal{W}$ is a $(C,C')$-controlled  $T$-fusion frame for $\h$ with bounds $\frac{A_{CC'}}{\lambda}$ and $B_{CC'}$.
\end{proof}
\begin{thm}
Let $K_1, K_2\in B(\mathcal{H})$ such that $\mathcal{R}_{K_1}\perp \mathcal{R}_{K_2}$. If $\mathcal{W}$  is a $(C,C')$-controlled  $K_i$-fusion frame for $\h$ $(i=1,2)$. Then $\mathcal{W}$ is a $(C,C')$-controlled  $(\alpha K_1+\beta K_1)$-fusion frame for $\h$, where $\alpha,\beta\in \mathbb{C}$.
\end{thm}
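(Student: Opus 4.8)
The plan is to check the two defining inequalities for the operator $\alpha K_1+\beta K_2$ one at a time, exploiting that the middle quantity $M(f):=\sum_{i\in I} w_i^2\langle \pi_{W_i}Cf,\pi_{W_i}C'f\rangle$ is the \emph{same} under both hypotheses and is unaffected when we replace $K_1$ (or $K_2$) by $\alpha K_1+\beta K_2$; only the operator-dependent lower bound must be re-derived. First I would record the upper bound, which is free: $\mathcal{W}$ being a $(C,C')$-controlled $K_1$-fusion frame already gives $M(f)\le B^{(1)}_{CC'}\Vert f\Vert^2$, and this serves verbatim as the Bessel bound for $\alpha K_1+\beta K_2$.

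The substance is the lower bound. Passing to adjoints, $(\alpha K_1+\beta K_2)^*=\bar\alpha K_1^*+\bar\beta K_2^*$, so I must control $\Vert\bar\alpha K_1^*f+\bar\beta K_2^*f\Vert^2$ from above by a multiple of $M(f)$. Expanding,
\begin{equation*}
\Vert(\alpha K_1+\beta K_2)^*f\Vert^2=|\alpha|^2\Vert K_1^*f\Vert^2+|\beta|^2\Vert K_2^*f\Vert^2+2\mathrm{Re}\big(\bar\alpha\beta\langle K_1^*f,K_2^*f\rangle\big).
\end{equation*}
I would then invoke the orthogonality hypothesis to discard the cross term and reduce to the sum $|\alpha|^2\Vert K_1^*f\Vert^2+|\beta|^2\Vert K_2^*f\Vert^2$. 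Finally, the two lower frame bounds give $\Vert K_1^*f\Vert^2\le (A^{(1)}_{CC'})^{-1}M(f)$ and $\Vert K_2^*f\Vert^2\le (A^{(2)}_{CC'})^{-1}M(f)$, whence
\begin{equation*}
\Vert(\alpha K_1+\beta K_2)^*f\Vert^2\le\Big(\frac{|\alpha|^2}{A^{(1)}_{CC'}}+\frac{|\beta|^2}{A^{(2)}_{CC'}}\Big)M(f),
\end{equation*}
so that $A_{CC'}:=\big(|\alpha|^2/A^{(1)}_{CC'}+|\beta|^2/A^{(2)}_{CC'}\big)^{-1}>0$ is a valid lower bound. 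Combined with the Bessel bound, this exhibits $\mathcal{W}$ as a $(C,C')$-controlled $(\alpha K_1+\beta K_2)$-fusion frame.

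The one delicate point is the cross term $\langle K_1^*f,K_2^*f\rangle$, and I would be careful about exactly which orthogonality kills it. The stated hypothesis $\mathcal{R}_{K_1}\perp\mathcal{R}_{K_2}$ is equivalent to $K_2^*K_1=0$, whereas $\langle K_1^*f,K_2^*f\rangle=\langle K_2K_1^*f,f\rangle$ vanishes for all $f$ precisely when $K_2K_1^*=0$, i.e. when $\mathcal{R}_{K_1^*}\perp\mathcal{R}_{K_2^*}$; these two conditions differ unless the $K_i$ are, for instance, normal. To keep the argument robust I would not rely on the cross term being exactly zero: the triangle inequality already gives $\Vert(\alpha K_1+\beta K_2)^*f\Vert^2\le 2\big(|\alpha|^2\Vert K_1^*f\Vert^2+|\beta|^2\Vert K_2^*f\Vert^2\big)$, which feeds into the same final estimate at the cost of a harmless factor of $2$ in $A_{CC'}$. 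Thus the heart of the proof is the two-term norm estimate for the adjoint combination, while the rest is the routine substitution of the individual lower bounds.
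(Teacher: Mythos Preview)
Your approach mirrors the paper's almost line for line: expand $\Vert(\alpha K_1+\beta K_2)^*f\Vert^2$, drop the cross terms via the orthogonality hypothesis, and substitute the two individual lower bounds $A^{(j)}_{CC'}\Vert K_j^*f\Vert^2\le M(f)$. The paper packages the resulting lower constant as $\dfrac{A^{(1)}_{CC'}A^{(2)}_{CC'}}{2\big(|\alpha|^2 A^{(1)}_{CC'}+|\beta|^2 A^{(2)}_{CC'}\big)}$ and takes the upper bound $\tfrac12\big(B^{(1)}_{CC'}+B^{(2)}_{CC'}\big)$, but these cosmetic differences aside the arguments coincide.

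Your reservation about the cross term is well founded and in fact more careful than the paper itself. The paper simply asserts that $\mathcal{R}_{K_1}\perp\mathcal{R}_{K_2}$ forces $\langle K_1^*f,K_2^*f\rangle=0$ for every $f$; as you correctly note, the stated hypothesis is equivalent to $K_2^*K_1=0$, whereas vanishing of $\langle K_1^*f,K_2^*f\rangle$ for all $f$ amounts (by polarization in a complex Hilbert space) to $K_2K_1^*=0$, a genuinely different condition. Your triangle-inequality fallback repairs this at the price of a harmless factor $2$ in the lower constant and, incidentally, makes the orthogonality hypothesis unnecessary for the conclusion.
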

\begin{proof}
Since $\mathcal{W}$  is a $(C,C')$-controlled  $K_i$-fusion frame for $\h$ $(i=1,2)$, there exist $A^j_{CC'},B^j_{CC'}>0$, such that for all $f\in \h$, $j=1,2$, we have \begin{align*}
A^j_{CC'}\langle K_j^* f, K_j^* f \rangle\leq \sum_{i\in I} w_i^2\langle \pi_{W_i}C f,\pi_{W_i}C f\rangle\leq B^j_{CC'} \langle  f,f\rangle.
\end{align*}
Then for any $f\in \h$, we have 
\begin{align*}
\langle (\alpha K_1+\beta K_2)^*f,(\alpha K_1+\beta K_2)^*f\rangle = \langle \overline{\alpha }K^*_1 f+ \overline{\beta} K^*_2 f,\overline{\alpha }K^*_1 f+ \overline{\beta} K^*_2 f\rangle 
\end{align*}
\begin{align*}
=\vert \alpha\vert^2 \langle K^*_1 f,K^*_1 f \rangle+ \overline{\alpha}\beta \langle K^*_1 f,K^*_2 f \rangle +\alpha\overline{\beta} \langle K^*_2 f,K^*_1 f \rangle+\vert \beta\vert^2 \langle K^*_2 f,K^*_2 f \rangle.
\end{align*}
Since $\mathcal{R}_{K_1}\perp \mathcal{R}_{K_2}$, then, for any $f\in \h$, we have
\begin{eqnarray*}
\langle K^*_1 f,K^*_2 f \rangle &=& 0,\\
\langle K^*_2 f,K^*_1 f \rangle &=& 0.\\
\end{eqnarray*}
Thus, \begin{align*}
\langle (\alpha K_1+\beta K_2)^*f,(\alpha K_1+\beta K_2)^*f\rangle = \vert \alpha\vert^2 \langle K^*_1 f,K^*_1 f \rangle+ \vert \beta\vert^2 \langle K^*_2 f,K^*_2 f \rangle.
\end{align*}
Therfore, for any $f\in \h$, we have 
\begin{align*}
\frac{A^1_{CC'} A^2_{CC'}}{2( \vert \alpha\vert^2 A^1_{CC'}+ \vert \beta\vert^2 A^2_{CC'})}\langle (\alpha K_1+\beta K_2)^*f,(\alpha K_1+\beta K_2)^*f\rangle  
\end{align*}\begin{eqnarray*}
&=&\frac{A^1_{CC'} A^2_{CC'}\vert \alpha\vert^2}{2( \vert \alpha\vert^2 A^1_{CC'}+ \vert \beta\vert^2 A^2_{CC'})}\langle K^*_1 f,K^*_1 f \rangle+\frac{A^1_{CC'} A^2_{CC'}\vert \beta\vert^2}{2( \vert \alpha\vert^2 A^1_{CC'}+ \vert \beta\vert^2 A^2_{CC'})}\langle K^*_2 f,K^*_2 f \rangle \\
&\leq & \frac{1}{2}( \sum_{i\in I} w_i^2\langle \pi_{W_i}C f,\pi_{W_i}C f\rangle  + \sum_{i\in I} w_i^2\langle \pi_{W_i}C f,\pi_{W_i}C f\rangle)\\
&\leq &\frac{B^1_{CC'} +B^2_{CC'} }{2}\langle  f,f\rangle.
\end{eqnarray*}
Thus, $\mathcal{W}$ is a $(C,C')$-controlled  $\alpha K_1+\beta K_2$-fusion frame with bounds $\frac{A^1_{CC'} A^2_{CC'}}{2( \vert \alpha\vert^2 A^1_{CC'}+ \vert \beta\vert^2 A^2_{CC'})}$ and $\frac{B^1_{CC'} +B^2_{CC'} }{2}$.
\end{proof}
\begin{lem}
Let $K\in B(\mathcal{H})$, and  $C,C'\in GL^+(\h)$. Assume that \\$CK=KC$, $C'K=KC'$ and $SC=CS$. Then, $\mathcal{W}$ is a $(C,C')$-controlled  $K$-fusion frame for $\h$  if and only if $\mathcal{W}$ is a $K$-fusion frame for $\h$.  \\
Where $S$ is the $K$-fusion frame operator (\ref{sfusionframeoper}), defined by $$ Sf=\sum_{i\in I} w_i^2\ \pi_{W_i} f,\quad f\in\h .$$, .
\end{lem}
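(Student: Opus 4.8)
The plan is to reduce both implications to a single two-sided comparison between the controlled operator $S_{CC'}=C'SC$ of (\ref{def:opers}) and the ordinary fusion frame operator $S$ of (\ref{sfusionframeoper}). First I would record the algebraic identity that turns the controlled sum into a quadratic form: since each $\pi_{W_i}$ is an idempotent self-adjoint projection and $C,C'$ are self-adjoint (being in $GL^+(\h)$), for every $f\in\mathcal{H}$ one has
\begin{align*}
\sum_{i\in I} w_i^2\langle \pi_{W_i}Cf,\pi_{W_i}C'f\rangle=\Big\langle C'\Big(\sum_{i\in I}w_i^2\pi_{W_i}\Big)Cf,f\Big\rangle=\langle C'SCf,f\rangle=\langle S_{CC'}f,f\rangle .
\end{align*}
At the same time the $K$-fusion frame property is, by the frame operator $S$, exactly $A\Vert K^*f\Vert^2\le\langle Sf,f\rangle\le B\Vert f\Vert^2$ for all $f$. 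Thus the whole statement reduces to estimating $\langle C'SCf,f\rangle$ against $\langle Sf,f\rangle$.

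The heart of the argument is the quadratic-form sandwich
\begin{align*}
m_Cm_{C'}\,\langle Sf,f\rangle\le \langle C'SCf,f\rangle\le M_CM_{C'}\,\langle Sf,f\rangle,\qquad f\in\mathcal{H},
\end{align*}
where, by Proposition \ref{prop: mmMM} applied to $C,C'\in GL^+(\h)$, one may fix constants with $m_CI\le C\le M_CI$ and $m_{C'}I\le C'\le M_{C'}I$. To obtain it I would invoke the commutativity hypothesis $SC=CS$ (hence also $S^{1/2}C^{1/2}=C^{1/2}S^{1/2}$, as $C^{1/2}$ is a norm-limit of polynomials in $C$) to move the controls past $S$ and exhibit $C'SC$ as $S$ times a positive operator $P$ comparable to the identity, with $m_Cm_{C'}I\le P\le M_CM_{C'}I$. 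The final step is the standard fact that if $0\le m_Cm_{C'}I\le P\le M_CM_{C'}I$ and $P$ commutes with the positive operator $S$, then multiplying through by $S$ preserves the order in the quadratic-form sense, because the product of two commuting positive operators is positive (write $(P-m_Cm_{C'}I)S=S^{1/2}(P-m_Cm_{C'}I)S^{1/2}\ge 0$, and similarly for the upper bound).

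Once the sandwich is in hand, both directions are immediate. If $\mathcal{W}$ is a $K$-fusion frame with bounds $A,B$, combining $A\Vert K^*f\Vert^2\le\langle Sf,f\rangle$ with the left inequality and $\langle Sf,f\rangle\le B\Vert f\Vert^2$ with the right inequality shows that $\mathcal{W}$ is a $(C,C')$-controlled $K$-fusion frame with bounds $m_Cm_{C'}A$ and $M_CM_{C'}B$; conversely, dividing the sandwich by the appropriate constants recovers $\tfrac{A_{CC'}}{M_CM_{C'}}\Vert K^*f\Vert^2\le\langle Sf,f\rangle\le\tfrac{B_{CC'}}{m_Cm_{C'}}\Vert f\Vert^2$, which is the plain $K$-fusion frame condition. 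I expect the genuine obstacle to be the justification of the sandwich rather than the two implications: $C'SC$ is self-adjoint, positive, and comparable to $S$ only because the operators involved commute, so the delicate point is to ensure the factorization $C'SC=PS$ with $P$ positive really goes through — this is precisely where $SC=CS$ (together with the controls commuting with $S$) is used. Note that the term $\Vert K^*f\Vert^2$ is never touched in this estimate, so the relations $CK=KC$ and $C'K=KC'$ serve only to keep $K$ compatible with the controls and do not enter the core inequality.
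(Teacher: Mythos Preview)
Your forward implication is essentially the paper's: both appeal to Proposition~\ref{prop: mmMM} to trap $C,C'$ between multiples of the identity and then pass from $A\,KK^*\le S\le BI$ to $mm'A\,KK^*\le C'SC\le MM'B\,I$ using $SC=CS$.

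The converse is where your route departs from the paper. The paper does \emph{not} reverse an operator sandwich $m_Cm_{C'}S\le C'SC\le M_CM_{C'}S$. Instead it substitutes $g=(CC')^{-1/2}f$ into the controlled lower bound and uses the hypotheses $CK=KC$, $C'K=KC'$ (hence $K^*(CC')^{-1/2}=(CC')^{-1/2}K^*$) to turn $\|K^*g\|^2$ into a quantity bounded below by $\|(CC')^{1/2}\|^{-2}\|K^*f\|^2$; after simplification the controlled quadratic form collapses to $\langle Sf,f\rangle$, with final bounds $A_{CC'}\|(CC')^{1/2}\|^{-2}$ and $B_{CC'}\|(CC')^{-1/2}\|^{2}$. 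So in the paper the commutations with $K$ are not decorative: they are exactly what lets the $\|K^*f\|^2$ term survive the change of variable. Your claim that $CK=KC$, $C'K=KC'$ ``do not enter the core inequality'' is true for \emph{your} argument but is not how the paper proceeds.

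There is, however, a gap in your justification of the sandwich. From the single hypothesis $SC=CS$ you get $C'SC=(C'C)S$, and your plan is to set $P=C'C$ and use $(P-m_Cm_{C'}I)S=S^{1/2}(P-m_Cm_{C'}I)S^{1/2}\ge 0$. But $P=C'C$ is positive only if $C$ and $C'$ commute, and the conjugation step requires $P$ to commute with $S$, which in turn forces $SC'=C'S$; neither of these is among the stated hypotheses. Without them $\langle C'SCf,f\rangle$ need not even be real, so the two-sided comparison with $\langle Sf,f\rangle$ cannot be derived from $SC=CS$ alone. The paper's substitution argument sidesteps the need to compare $C'SC$ with $S$ in the operator order, trading it for the commutation hypotheses on $K$ (while still tacitly using $(CC')^{1/2}$, hence $CC'=C'C$, in the algebra). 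If you want to keep your cleaner, symmetric approach, you should either add $SC'=C'S$ to the standing assumptions or explain why it follows from the hypotheses you have.
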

\begin{proof}
Assume that  $\mathcal{W}$ is a $K-$fusion frame with bounds $A$ and $B$. Then for each $f\in\h$, we have
\begin{align*}
A \Vert K^* f\Vert^2\leq\sum_{i\in I} w_i^2\Vert \pi_{W_i} f\Vert^2\leq B  \Vert  f\Vert^2.
\end{align*}
Since, $C$ and $C'$ are linear bounded operators, applying \ref{prop: mmMM}, there exist constants $m,m'$, $M$ and $M'>0$ such that
$$
\left\{
\begin{array}{rl}  m I \leq & \mbox{ C }\leq M I,\\
m' I\leq &\mbox{C'}\leq M'I .
\end{array}
\right.
$$
\begin{align*}
\langle SCf,f \rangle= \langle f, CSf\rangle.
\end{align*}
Then, \begin{align*}
m K K^*\leq CS\leq MS\leq MBI.
\end{align*}
We deduce that \begin{align*}
m m'K K^*\leq C'SC\leq M M'BI.
\end{align*}
Therfore, for each $f\in\h$, we have\begin{align*}
m m' A\langle K^*f,K^*f \rangle\leq \sum_{i\in I} w_i^2\langle \pi_{W_i}C f\pi_{W_i}C' f\rangle\leq  M M'B \Vert  f\Vert^2. 
\end{align*}
Thus, $\mathcal{W}$ is a $(C,C)$-controlled  $K-$fusion frame.\\
Conversely, Assume that  $\mathcal{W}$ is a $(C,C)$-controlled  $K-$fusion frame with bounds $A$ and $B$. Then for each $f\in\h$, we have
\begin{align*}
A_{CC'} \Vert K^* f\Vert^2\leq\sum_{i\in I} w_i^2\langle \pi_{W_i} Cf,\pi_{W_i} C'f\leq B _{CC'} \Vert  f\Vert^2.
\end{align*}
For each $f\in \h$, we have
\begin{eqnarray*}
A_{CC'} \langle K^*f,K^*f\rangle &=& A _{CC'}\langle (CC')^{-\frac{1}{2}}(CC')^{\frac{1}{2}} K^*f, (CC')^{-\frac{1}{2}}(CC')^{\frac{1}{2}}K^*f\rangle \\
&\leq & A _{CC'}\Vert(CC')^{\frac{1}{2}}\Vert^2\sum_{i\in I} w_i^2 \langle  \pi_{W_i}C(CC')^{-\frac{1}{2}} f, \pi_{W_i}C (CC')^{-\frac{1}{2}}f\rangle\\
&=& A _{CC'} \Vert(CC')^{\frac{1}{2}}\Vert^2\sum_{i\in I} w_i^2 \langle  \pi_{W_i}(C)^{\frac{1}{2}} (C')^{-\frac{1}{2}} f, \pi_{W_i}(C)^{\frac{1}{2}} (C')^{-\frac{1}{2}}f\rangle\\
&=& A _{CC'}\Vert(CC')^{\frac{1}{2}}\Vert^2 \langle \sum_{i\in I} w_i^2 \pi_{W_i}(C)^{\frac{1}{2}} (C')^{-\frac{1}{2}} f, \pi_{W_i}(C)^{\frac{1}{2}} (C')^{-\frac{1}{2}}f\rangle\\
&=& A _{CC'}\Vert(CC')^{\frac{1}{2}}\Vert^2 \langle \sum_{i\in I} w_i^2 \pi_{W_i} f, f\rangle.\\
\end{eqnarray*}
\begin{align*}
\Longrightarrow \quad A_{CC'}\Vert(CC')^{\frac{1}{2}}\Vert^{-2} \langle K^*f,K^*f\rangle \leq  \sum_{i\in I} w_i^2 \langle  \pi_{W_i}f,\pi_{W_i}f\rangle.
\end{align*}
In the other hand \begin{align*}
 \sum_{i\in I} w_i^2 \langle  \pi_{W_i}f,\pi_{W_i}f\rangle =\langle Sf,f\rangle,
\end{align*}
where  $\quad\quad Sf=\sum_{i\in I} w_i^2  \pi_{W_i}f$.
\begin{eqnarray*}
\langle Sf,f\rangle &=& \langle (CC')^{-\frac{1}{2}} (CC')^{\frac{1}{2}} Sf,f\rangle\\
&=&\langle (CC')^{\frac{1}{2}} Sf,(CC')^{-\frac{1}{2}} f\rangle\\
&=&\langle (CC')^{\frac{1}{2}} Sf,(CC')^{-\frac{1}{2}} f\rangle\\&=&\langle C'SC(CC')^{-\frac{1}{2}} f,(CC')^{-\frac{1}{2}} f\rangle\\
&\leq& B_{CC'} \Vert(CC')^{-\frac{1}{2}}\Vert^2 \Vert f\Vert^2.
\end{eqnarray*}
Thus,  $\mathcal{W}$ is a $K-$fusion frame  with bounds $A_{CC'}\Vert(CC')^{\frac{1}{2}}\Vert^{-2} $ and $B_{CC'} \Vert(CC')^{-\frac{1}{2}}\Vert^2$.\\
\end{proof}
  \begin{thm}
Let $K\in B(\mathcal{H})$, let $\mathcal{W}$ be a $(C,C)$-controlled  $K$-fusion frame with bounds $A_{CC}$ and $B_{CC}$. If $U\in \mathcal{B(\mathcal{H}})$ is an invertible operator such that $U^*C = CU^*$ and $K^*(U^*)^{-1}=(U^*)^{-1}K^*$, then ${(U W_i, w_i)}_{i\in I} $ is a $(C,C)$-controlled $K$-fusion frame for $\h$.
 \end{thm}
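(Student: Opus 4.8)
The plan is to transfer the two frame inequalities valid for $\mathcal{W}=\{W_i,w_i\}_{i\in I}$ to the family $\{UW_i,w_i\}_{i\in I}$ by expressing $\pi_{UW_i}Cf$ in terms of $\pi_{W_i}CU^*f$, and conversely. First I would record the elementary consequences of the hypotheses. Since $U\in GL(\mathcal{H})$, each $UW_i$ is closed, so $\pi_{UW_i}$ is well defined and $\overline{UW_i}=UW_i$. The relation $U^*C=CU^*$ gives $U^*Cf=CU^*f$ for all $f$, and $K^*(U^*)^{-1}=(U^*)^{-1}K^*$ yields, after multiplying on both sides by $U^*$, the commutation $K^*U^*=U^*K^*$. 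These are precisely the moves that let me slide $C$ and $K^*$ past $U^*$ at the right moments.

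The key step is to produce two intertwining identities from Lemma \ref{lem: Projection-operator}, equation (\ref{lem: pi-T}). Applying it with $V=UW_i$ and $T=U^{-1}$ (so that $\overline{TV}=W_i$ and $T^*=(U^*)^{-1}$) gives $\pi_{UW_i}(U^*)^{-1}=\pi_{UW_i}(U^*)^{-1}\pi_{W_i}$; evaluating both sides at the vector $U^*Cf$ and using $U^*Cf=CU^*f$ yields
$$\pi_{UW_i}Cf=\pi_{UW_i}(U^*)^{-1}\pi_{W_i}CU^*f.$$
Applying the same equation with $V=W_i$ and $T=U$ (so $\overline{TV}=UW_i$, $T^*=U^*$) gives $\pi_{W_i}U^*=\pi_{W_i}U^*\pi_{UW_i}$; evaluating at $Cf$ and again using $U^*Cf=CU^*f$ yields
$$\pi_{W_i}CU^*f=\pi_{W_i}U^*\pi_{UW_i}Cf.$$

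With these two identities the estimates are routine. For the upper bound, the first identity gives $\Vert\pi_{UW_i}Cf\Vert\le\Vert(U^*)^{-1}\Vert\,\Vert\pi_{W_i}CU^*f\Vert$ (using $\Vert\pi_{UW_i}\Vert\le1$); summing against the weights $w_i^2$ and invoking the upper inequality of (\ref{def: ckf}) for the vector $U^*f$, then $\Vert U^*f\Vert\le\Vert U^*\Vert\,\Vert f\Vert$, produces $\sum_i w_i^2\Vert\pi_{UW_i}Cf\Vert^2\le\Vert(U^*)^{-1}\Vert^2\Vert U^*\Vert^2 B_{CC}\Vert f\Vert^2$. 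For the lower bound, the second identity gives $\Vert\pi_{W_i}CU^*f\Vert\le\Vert U^*\Vert\,\Vert\pi_{UW_i}Cf\Vert$; summing and applying the lower inequality of (\ref{def: ckf}) at $U^*f$ gives $A_{CC}\Vert K^*U^*f\Vert^2\le\Vert U^*\Vert^2\sum_i w_i^2\Vert\pi_{UW_i}Cf\Vert^2$. Finally $K^*U^*=U^*K^*$ together with $\Vert K^*f\Vert\le\Vert(U^*)^{-1}\Vert\,\Vert U^*K^*f\Vert$ converts $\Vert K^*U^*f\Vert$ back into $\Vert K^*f\Vert$, giving the lower bound with constant $A_{CC}\big(\Vert U^*\Vert^2\Vert(U^*)^{-1}\Vert^2\big)^{-1}$. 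Hence $\{UW_i,w_i\}_{i\in I}$ is a $(C,C)$-controlled $K$-fusion frame with bounds $A_{CC}/(\Vert U^*\Vert^2\Vert(U^*)^{-1}\Vert^2)$ and $\Vert U^*\Vert^2\Vert(U^*)^{-1}\Vert^2 B_{CC}$.

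The main obstacle is precisely that $U$ is only invertible, not unitary, so the naive identity $\pi_{UW_i}=U\pi_{W_i}U^{-1}$ (available for unitaries via (\ref{lem: piTunitaire})) cannot be used; the whole argument hinges on replacing it by the one-sided intertwining relations coming from (\ref{lem: pi-T}) and on checking that the two commutation hypotheses let $C$ and $K^*$ pass through $U^*$ exactly where needed.
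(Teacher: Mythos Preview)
Your proof is correct and follows essentially the same route as the paper's: both arguments invoke Lemma~\ref{lem: Projection-operator} twice (once with $T=U$ and $V=W_i$, once with $T=U^{-1}$ and $V=UW_i$) to compare $\Vert\pi_{W_i}CU^*f\Vert$ and $\Vert\pi_{UW_i}Cf\Vert$, and then use the two commutation hypotheses to pass $C$ and $K^*$ through $U^*$. The resulting bounds $A_{CC}\Vert U\Vert^{-2}\Vert U^{-1}\Vert^{-2}$ and $B_{CC}\Vert U\Vert^{2}\Vert U^{-1}\Vert^{2}$ agree (using $\Vert U^*\Vert=\Vert U\Vert$, $\Vert(U^*)^{-1}\Vert=\Vert U^{-1}\Vert$); the only cosmetic difference is that you derive the upper inequality first, while the paper begins with the lower one.
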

\begin{proof}
Assume that $\mathcal{W}$ is a $(C,C)$-controlled  $K-$fusion frame with bounds $A_{CC}$ and $B_{CC}$.\\
By definition, for each $f\in\h$, we have \begin{align*}
A_{CC} \Vert K^* f\Vert^2\leq\sum_{i\in I} w_i^2\Vert \pi_{W_i}C f\Vert^2\leq B_{CC} \Vert  f\Vert^2.
\end{align*}
Now, let $f\in\h$. Via lemma \ref{lem: Projection-operator} and since $UW_i$ is closed, we have
\begin{eqnarray*}
\Vert \pi_{W_i}C U^*f\Vert=\Vert \pi_{W_i} U^* Cf\Vert =\Vert \pi_{W_i}U^*\pi_{\overline{UW_i}} Cf\Vert &=&\Vert \pi_{W_i}U^*\pi_{UW_i} Cf\Vert\\ &\leq & \Vert U\Vert \Vert \pi_{UW_i} Cf\Vert.
\end{eqnarray*}
Therefore, 
\begin{eqnarray*}
A_{CC} \Vert K^* U^* f\Vert^2  &\leq &   \sum_{i\in I} w_i^2\Vert \pi_{W_i}C U^* f\Vert^2 \\ &\leq &  \Vert U\Vert^2 \sum_{i\in I} w_i^2\Vert \pi_{UW_i}C f\Vert^2.
\end{eqnarray*}
\begin{eqnarray*}
A_{CC}\Vert K^*  f\Vert^2 =\Vert K^* (U^*)^{-1} U^* f\Vert^2 &=& \Vert (U^*)^{-1} K^*  U^* f\Vert^2 \\ &\leq& \Vert U^{-1}\Vert^2 \Vert K^* U^* f\Vert^2.
\end{eqnarray*}
Then, we have\begin{align}\label{eq: ccontrolled1}
\frac{A}{\Vert U^{-1}\Vert^{-2} \Vert U\Vert^{-2}}\Vert K^*  f\Vert^2 \leq \sum_{i\in I} w_i^2\Vert \pi_{UW_i}C f\Vert^2. 
\end{align}
On the other hand,  Via lemma \ref{lem: Projection-operator}, we obtain with $U^{-1}$ instead of $T$:\begin{align*}
\pi_{UW_i}=\pi_{UW_i} (U^*)^{-1} \pi_{W_i}U^*.
\end{align*}
Thus,\begin{eqnarray*}
\Vert \pi_{UW_i}Cf\Vert^2 &=& \Vert \pi_{UW_i} (U^*)^{-1} \pi_{W_i}U^*Cf\Vert^2\\&\leq& \Vert U^{-1}\Vert^{2} \Vert \pi_{W_i} U^*Cf\Vert^2,
\end{eqnarray*}
and it follows \begin{align*}
\sum_{i\in I} w_i^2 \Vert \pi_{UW_i}Cf\Vert^2  \leq  \Vert U^{-1}\Vert^{2} \sum_{i\in I} w_i^2\Vert \pi_{W_i} U^*Cf\Vert^2.
\end{align*}
hence, \begin{align*}
  \sum_{i\in I} w_i^2\Vert \pi_{UW_i}Cf\Vert^2 \leq B_{CC} \Vert U^{-1}\Vert^{2}
   \Vert U\Vert^2 \Vert f\Vert^2.
\end{align*} 
Thus, $\mathcal{W}$ is a $(C,C)$-controlled  $K-$fusion frame with bounds $A_{CC}\Vert U^{-1}\Vert^{-2}
   \Vert U\Vert^{-2}$ and $B_{CC} \Vert U^{-1}\Vert^{2}
   \Vert U\Vert^2$.
\end{proof} 
 \begin{cor}Let $K\in B(\mathcal{H})$, let $\mathcal{W}$ be a $(C,C)$-controlled  $K-$fusion frame with bounds $A_{CC}$ and $B_{CC}$
If $U\in \mathcal{B(\mathcal{H}})$ is a  unitary operator such that $U^{-1}C = CU^{-1}$ and $K^* U=U K^*$, then ${(U W_i, w_i)}_{i\in I} $ is a $(C,C)$-controlled $K$-fusion frame for $\h$.
\end{cor}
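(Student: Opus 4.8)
The plan is to obtain this corollary as an immediate specialization of the preceding theorem, so the entire task reduces to checking that the unitarity hypotheses translate verbatim into the invertibility hypotheses required there. First I would record that a unitary $U$ satisfies $U^*U = UU^* = Id_{\h}$, hence $U$ is invertible with $U^{-1} = U^*$, and consequently $(U^*)^{-1} = (U^{-1})^{-1} = U$.

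Next I would verify the two commutation conditions of the theorem. The hypothesis $U^{-1}C = CU^{-1}$ becomes, after the substitution $U^{-1} = U^*$, the identity $U^*C = CU^*$, which is precisely the first commutation hypothesis of the theorem. Similarly, the hypothesis $K^*U = UK^*$ becomes, after the substitution $U = (U^*)^{-1}$, the identity $K^*(U^*)^{-1} = (U^*)^{-1}K^*$, which is exactly the theorem's second commutation hypothesis. Since $\mathcal{W}$ is by assumption a $(C,C)$-controlled $K$-fusion frame with bounds $A_{CC}$ and $B_{CC}$, all of the theorem's hypotheses are now in place.

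With both commutation relations established, I would apply the theorem to conclude that $\{(UW_i, w_i)\}_{i\in I}$ is a $(C,C)$-controlled $K$-fusion frame for $\h$, a priori with bounds $A_{CC}\Vert U^{-1}\Vert^{-2}\Vert U\Vert^{-2}$ and $B_{CC}\Vert U^{-1}\Vert^{2}\Vert U\Vert^{2}$. Finally I would remark that a unitary operator satisfies $\Vert U\Vert = \Vert U^{-1}\Vert = 1$, so these bounds collapse to exactly $A_{CC}$ and $B_{CC}$; thus the unitary hypothesis preserves not only the $K$-fusion frame property but also the original frame bounds. The only point requiring any care is purely a bookkeeping one, namely the identification $(U^*)^{-1} = U$ that allows the corollary's third hypothesis to match the theorem's second condition; no genuine analytic difficulty arises, since all of the substantive work already resides in the theorem.
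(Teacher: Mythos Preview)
Your proposal is correct, but it takes a genuinely different route from the paper's own proof. The paper does \emph{not} deduce the corollary from the preceding theorem; instead it reproves the result from scratch, essentially rerunning the argument of the theorem while exploiting the unitary-specific identity $\pi_{UW_i}U = U\pi_{W_i}$ from Lemma~\ref{lem: Projection-operator} (in addition to $\pi_{W_i}U^{-1} = \pi_{W_i}U^{-1}\pi_{UW_i}$, which follows from $U^* = U^{-1}$). It arrives at the same bounds $A_{CC}\Vert U^{-1}\Vert^{-2}\Vert U\Vert^{-2}$ and $B_{CC}\Vert U^{-1}\Vert^{2}\Vert U\Vert^{2}$ but does not simplify them. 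Your approach is shorter and cleaner: once you observe $U^{-1}=U^*$ and hence $(U^*)^{-1}=U$, the corollary's hypotheses $U^{-1}C=CU^{-1}$ and $K^*U=UK^*$ transcribe exactly to the theorem's hypotheses $U^*C=CU^*$ and $K^*(U^*)^{-1}=(U^*)^{-1}K^*$, so the conclusion is immediate. You also go a step further than the paper by noting that $\Vert U\Vert=\Vert U^{-1}\Vert=1$ collapses the bounds to $A_{CC}$ and $B_{CC}$. What the paper's direct proof buys is independence from the theorem and an illustration of how the unitary projection identity can be used; what your approach buys is economy and the avoidance of a redundant computation.
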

\begin{proof} Assume that $\mathcal{W}$ is a $(C,C)$-controlled  $K-$fusion frame with bounds $A_{CC}$ and $B_{CC}$.\\
By definition, for each $f\in\h$, we have \begin{align*}
A_{CC} \Vert K^* f\Vert^2\leq\sum_{i\in I} w_i^2\Vert \pi_{W_i}C f\Vert^2\leq B_{CC} \Vert  f\Vert^2.
\end{align*}
Now, let $f\in\h$. Since, 
\begin{align*}
\Vert \pi_{W_i}C U^{-1}f\Vert=\Vert \pi_{W_i}U^{-1}C f\Vert
\end{align*}
Via \ref{lem: Projection-operator}, we obtain
\begin{align*}
\pi_{W_i} U^{-1} =\pi_{W_i} U^{-1} \pi_{UW_i} .
\end{align*}
since by assumption, we have  $(U^*=U^{-1})$.\\
It follows that \begin{eqnarray*}
\Vert \pi_{W_i}C U^{-1}f\Vert &=& \Vert \pi_{W_i} U^{-1} \pi_{UW_i}C f\Vert\\
&\leq &\Vert U^{-1}\Vert \Vert \pi_{UW_i}C f \Vert.
\end{eqnarray*}
Therefore, we have\begin{eqnarray*}
A_{CC}\Vert K^*U^{-1}f\Vert^2 &\leq &\sum_{i\in I} w_i^2\Vert \pi_{W_i} C U^{-1}f\Vert^2\\
&\leq &\Vert U^{-1}\Vert^2 \sum_{i\in I}  w_i^2 \Vert \pi_{UW_i} C f\Vert^2.
\end{eqnarray*}
And since $K^* U=U K^*$, we have \begin{eqnarray*}
\Vert K^* f\Vert^2 &=& \Vert K^* U U^{-1}f\Vert^2\\
&=&\Vert U K^*  U^{-1}f\Vert^2\\
&\leq & \Vert U\Vert^2\Vert K^* U^{-1}f\Vert^2.
\end{eqnarray*}
Thus, \begin{align}\label{eq: uu1}
\frac{A_{CC}}{\Vert U\Vert^2\Vert  U^{-1}\Vert^2}\Vert K^* f\Vert^2\leq\sum_{i\in I}  w_i^2 \Vert \pi_{UW_i} C f\Vert^2.
\end{align}
On the other hand, since $\pi_{UW_i}U=U\pi_{W_i}$, we have \begin{eqnarray*}
\sum_{i\in I}w_i^2 \Vert \pi_{UW_i} C f\Vert^2 &=& \sum_{i\in I}w_i^2 \Vert \pi_{UW_i} U U^{-1}C f\Vert^2\\
&=& \sum_{i\in I}w_i^2 \Vert U \pi_{W_i}  U^{-1}C f\Vert^2\\
&\leq &\Vert U\Vert^2\sum_{i\in I}w_i^2 \Vert \pi_{W_i}  U^{-1}C f\Vert^2\\
&\leq & B_{CC}\Vert U\Vert^2 \Vert U^{-1}\Vert^2 \Vert F\Vert^2.
\end{eqnarray*}
Thus, \begin{align}\label{eq: uu2}
\sum_{i\in I}w_i^2 \Vert \pi_{UW_i} C f\Vert^2\leq  B_{CC}\Vert U\Vert^2 \Vert U^{-1}\Vert^2 \Vert F\Vert^2.
\end{align}
By \ref{eq: uu1} and \ref{eq: uu2} we conclude that $\mathcal{W}$ is a $(C,C)$-controlled  $K-$fusion frame with bounds $A_{CC}\Vert U^{-1}\Vert^{-2}
   \Vert U\Vert^{-2}$ and $B_{CC} \Vert U^{-1}\Vert^{2}
   \Vert U\Vert^2$.
\end{proof} 
 \section{Perturbation on Controlled $K$-fusion frame}
The following result provides a sufficient condition on a family of closed subspaces of $\mathcal{H}$ to be a controlled $K$-fusion frame, in the precence of another controlled $K$-fusion frame. In fact it is a generalisation of Proposition $2.4$ in \cite{AK05}, Proposition $4.6$ in \cite{CH97} and Proposition $2.6$ in \cite{KM12}.
\begin{prop}
Let $ K\in B(\mathcal{H})$ be a closed range operator $\mathcal{R}_{K}$, let $T, U\in GL(\mathcal{H})$ and let $\mathcal{W}=\{W_i,w_i\}_{i\in I}$ be a $(C,C')$-controlled  $K-$fusion frame  for $\mathcal{H}$ with lower and upper  bounds $A_{CC'}$ and $B_{CC'}$, respectively. Let $\{V_i\}_{i\in I}$ be a family of closed subspaces of $\mathcal{H}$. if there exists a number $0< R <A_{CC'}$ such that \begin{align}
0<\sum_{i\in I} w_i^2 \langle C'^*( \pi_{V_i}-\pi_{W_i})C f, f\rangle \leq R\Vert f\Vert^2, \forall f\in \mathcal{H},
\end{align}
then $\mathcal{V}=\{V_i,w_i\}_{i\in I}$ is a $(C,C')$-controlled  Bessel $K-$fusion sequence for $\mathcal{H}$ and a $(C,C')$-controlled  $K-$fusion frame  for $\mathcal{R}_{K}$.
\end{prop}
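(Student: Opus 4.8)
The plan is to work directly with the controlled $K$-fusion frame inequality for $\mathcal{V}$ and to compare it term by term with the one already known for $\mathcal{W}$. First I would rewrite the middle term in operator form: since each $\pi_{V_i}$ is self-adjoint and idempotent, for every $f\in\mathcal{H}$ one has $\langle \pi_{V_i}Cf,\pi_{V_i}C'f\rangle=\langle \pi_{V_i}^*\pi_{V_i}Cf,C'f\rangle=\langle \pi_{V_i}Cf,C'f\rangle=\langle C'^*\pi_{V_i}Cf,f\rangle$, and the same identity holds with $V_i$ replaced by $W_i$. Summing against the weights $w_i^2$ expresses $\sum_{i\in I}w_i^2\langle \pi_{V_i}Cf,\pi_{V_i}C'f\rangle$ as $\langle S^{\mathcal V}_{CC'}f,f\rangle$, where $S^{\mathcal V}_{CC'}f=\sum_{i\in I}w_i^2 C'^*\pi_{V_i}Cf$ is the candidate controlled frame operator of $\mathcal{V}$. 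The crucial observation is then the splitting
\[
\sum_{i\in I}w_i^2\langle \pi_{V_i}Cf,\pi_{V_i}C'f\rangle=\sum_{i\in I}w_i^2\langle \pi_{W_i}Cf,\pi_{W_i}C'f\rangle+\sum_{i\in I}w_i^2\langle C'^*(\pi_{V_i}-\pi_{W_i})Cf,f\rangle,
\]
in which the last sum is exactly the perturbation quantity controlled by the hypothesis.

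For the Bessel (upper) bound I would add the two upper estimates: the frame hypothesis on $\mathcal{W}$ gives $\sum_{i\in I}w_i^2\langle \pi_{W_i}Cf,\pi_{W_i}C'f\rangle\leq B_{CC'}\Vert f\Vert^2$, while the perturbation hypothesis gives $\sum_{i\in I}w_i^2\langle C'^*(\pi_{V_i}-\pi_{W_i})Cf,f\rangle\leq R\Vert f\Vert^2$. Adding them yields $\sum_{i\in I}w_i^2\langle \pi_{V_i}Cf,\pi_{V_i}C'f\rangle\leq (B_{CC'}+R)\Vert f\Vert^2$ for all $f\in\mathcal{H}$, so $\mathcal{V}$ is a $(C,C')$-controlled Bessel $K$-fusion sequence with bound $B_{CC'}+R$; by Theorem \ref{thm: operatobiendefini} this is equivalent to the well-definedness and boundedness of the associated operator $T^*_{CC'}$, which I would record at this stage.

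For the lower bound I would exploit that the perturbation term is assumed to be strictly positive. From the splitting, $\langle S^{\mathcal V}_{CC'}f,f\rangle\geq \sum_{i\in I}w_i^2\langle \pi_{W_i}Cf,\pi_{W_i}C'f\rangle\geq A_{CC'}\Vert K^*f\Vert^2$ for every $f\in\mathcal{H}$, since the omitted term is non-negative. Thus the lower $K$-fusion frame inequality is inherited by $\mathcal{V}$ with the same constant $A_{CC'}$. Restricting to $f\in\mathcal{R}_K$ and using that $K$ has closed range (so that, via Lemma \ref{lem: Pseudo-inverse}, $\Vert K^*f\Vert\geq \Vert K^\dagger\Vert^{-1}\Vert f\Vert$ on $\mathcal{R}_K$) shows that this lower bound is a genuine positive multiple of $\Vert f\Vert^2$ there, which is precisely what makes $\mathcal{V}$ a $(C,C')$-controlled $K$-fusion frame for $\mathcal{R}_K$.

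The computations are all routine; the only point requiring care is the opening algebraic reduction of the bilinear expression $\langle \pi_{V_i}Cf,\pi_{V_i}C'f\rangle$ to the operator form $\langle C'^*\pi_{V_i}Cf,f\rangle$, since everything afterwards is a direct comparison of the two frame inequalities. I would also emphasize that the non-negativity assumption on the perturbation is exactly what lets the lower bound survive without loss: the constant $A_{CC'}$ is preserved, and the requirement $R<A_{CC'}$ serves only to keep $R$ comparable with the original lower bound rather than to rescue positivity of the lower constant.
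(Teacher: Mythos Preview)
Your argument is correct, and the upper-bound half coincides with the paper's proof. The lower-bound half, however, follows a genuinely different route. The paper (after a harmless sign slip in the splitting) bounds the perturbation term from \emph{above} by $R\Vert f\Vert^{2}$, obtaining
\[
\sum_{i\in I} w_i^{2}\langle C'^{*}\pi_{V_i}Cf,f\rangle \;\geq\; A_{CC'}\Vert K^{*}f\Vert^{2}-R\Vert f\Vert^{2},
\]
and then, restricting to $f\in\mathcal{R}_K$, uses Lemma~\ref{lem: Pseudo-inverse} to replace $\Vert f\Vert^{2}$ by $\Vert K^{\dagger}\Vert^{2}\Vert K^{*}f\Vert^{2}$, ending with the lower constant $A_{CC'}-R\Vert K^{\dagger}\Vert^{2}$. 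You instead use the \emph{lower} part of the perturbation hypothesis (strict positivity), drop the perturbation term altogether, and keep the original lower constant $A_{CC'}$ on all of $\mathcal{H}$; the pseudo-inverse step, the restriction to $\mathcal{R}_K$, and even the assumption $R<A_{CC'}$ become superfluous in your argument. Your path is shorter and gives a sharper constant. The paper's path is the one that would be forced if the hypothesis were the more standard two-sided bound $\bigl|\sum_{i} w_i^{2}\langle C'^{*}(\pi_{V_i}-\pi_{W_i})Cf,f\rangle\bigr|\le R\Vert f\Vert^{2}$, as in the perturbation results of \cite{AK05,CH97,KM12} that the proposition generalizes; under that weaker hypothesis your shortcut would no longer be available.
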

\begin{proof}
Let $f\in \mathcal{H}$. Considering that the family $\mathcal{W}=\{W_i,w_i\}_{i\in I}$ is a $(C,C)$-controlled  $K-$fusion frame  for $\mathcal{H}$, we have \begin{align*}
 A_{CC'}\Vert K^* f\Vert^2\leq \sum_{i\in I} w_i^2  \langle C'^*\pi_{W_i}C f,f\rangle \leq B_{CC'} \Vert f\Vert^2.
\end{align*}
Firstly, let us prove that  $\{V_i,w_i\}_{i\in I}$ is a  $(C,C')$-controlled  Bessel $K-$fusion sequence for $\mathcal{H}$. We have \begin{eqnarray*}
\sum_{i\in I} w_i^2  \langle C'^*\pi_{V_i}C f,f\rangle &=& \sum_{i\in I}  w_i^2 \langle C'^*( \pi_{V_i}-\pi_{W_i})C f, f\rangle+\sum_{i\in I} w_i^2  \langle C'^*\pi_{W_i}C f,f\rangle\\
&\leq & R \Vert f\Vert^2+ B_{CC'}\Vert f\Vert^2,
\end{eqnarray*}
consequently,
 \begin{align*}
 \sum_{i\in I} w_i^2  \langle C'^*\pi_{V_i}C f,f\rangle \leq (R+ B_{CC'})\Vert f\Vert^2.
 \end{align*}
 Now, let us establish for $\{V_i,w_i\}_{i\in I}$  the left-hand side. We obtain
 \begin{align*}
 \sum_{i\in I} w_i^2  \langle C'^*\pi_{V_i}C f,f\rangle =\sum_{i\in I} w_i^2  \langle C'^*\pi_{W_i}C f,f\rangle -\sum_{i\in I}  w_i^2 \langle C'^*( \pi_{V_i}-\pi_{W_i})C f, f\rangle
 \end{align*}
 \begin{align}\label{proof: perturbation2}
\geq  A_{CC'}\Vert K^* f\Vert^2-R\Vert  f\Vert^2.
 \end{align}
  Therfore, for any $f\in\mathcal{R}_K $, we have
\begin{eqnarray*}
\Vert f\Vert =\Vert (K^\dagger\mid_{\mathcal{R}_K})^*K^* f\Vert\leq \Vert K^\dagger\Vert \Vert K^* f\Vert,
\end{eqnarray*}
that is, $\Vert K^* f\Vert^2\geq \Vert K^\dagger\Vert^{-2}\Vert \Vert f \Vert^2$. \begin{align}\label{proof: perturbation1}
-R \Vert f \Vert^2 \geq -R \Vert K^\dagger\Vert^{-2} \Vert K^* f\Vert^2.
\end{align} 
Then, according to \ref{proof: perturbation2} and \ref{proof: perturbation1}, we obtain
\begin{align*}
\sum_{i\in I} w_i^2  \langle C'^*\pi_{V_i}C f,f\rangle\geq (A_{CC'} -R \Vert K^\dagger\Vert^{-2})\Vert K^* f\Vert^2.
\end{align*}
Which completes the proof.
\end{proof}
  

  \bigskip

\noindent{ \bf Aknowlegements : } 
{\it The authors would like to thank the anonymous reviewers whose comments helped us to improve the presentation of this paper.

\smallskip

\smallskip

}


 \end{document}